\theoremstyle{plain}
\newtheorem{theorem}{Theorem}[section]
\newtheorem{lemma}[theorem]{Lemma}
\newtheorem{corollary}[theorem]{Corollary}
\newtheorem{fact}[theorem]{Fact}
\theoremstyle{definition}
\newtheorem{definition}[theorem]{Definition}
\newtheorem{remark}[theorem]{Remark}
\newtheorem{example}[theorem]{Example}
\newtheorem*{problem*}{Problem}
\newtheorem*{claim}{Claim}
\newcommand{\dist}{\mathrm{dist}}
\newcommand{\C}{\mathrm{c}}
\newcommand{\SX}{S_{X^*}}
\newcommand{\R}{\mathbb{R}}
\newcommand{\N}{\mathbb{N}}
\newcommand{\inte}{\mathrm{int}\,}
\newcommand{\cconv}{\overline{\mathrm{conv}}\,}
\newcommand{\conv}{{\mathrm{conv}}\,}
\renewcommand{\epsilon}{\varepsilon}
\renewcommand{\phi}{\varphi}
\begin{document}

\title{A variational approach to the alternating projections method}

\author{Carlo Alberto De Bernardi
}
\address{Dipartimento di Matematica per le Scienze economiche, finanziarie ed attuariali, Universit\`{a} Cattolica del Sacro Cuore, Via Necchi 9, 20123 Milano, Italy}

\email{carloalberto.debernardi@unicatt.it, carloalberto.debernardi@gmail.com}

\author{Enrico Miglierina
}
\address{Dipartimento di Matematica per le Scienze economiche, finanziarie ed attuariali, Universit\`{a} Cattolica del Sacro Cuore, Via Necchi 9, 20123 Milano, Italy}

\email{enrico.miglierina@unicatt.it}

 \subjclass[2010]{Primary: 47J25; secondary: 90C25, 90C48}

 \keywords{convex feasibility problem, stability, set-convergence, alternating projections method}

 \thanks{
}



\begin{abstract}
	The 2-sets convex feasibility problem aims at finding a point in
	the nonempty intersection of two closed convex sets $A$ and $B$ in a Hilbert space $X$. The method of alternating projections is the simplest iterative procedure for finding a solution and it goes back to von Neumann. 	
	In the present paper, we study some stability properties for this method in the following sense: we consider two sequences of sets,
	each of them converging, with respect to the  Attouch-Wets variational convergence, respectively, to $A$ and $B$. Given a starting point $a_0$, we consider the sequences of points obtained by projecting on the ``perturbed'' sets, i.e., the sequences $\{a_n\}$ and $\{b_n\}$ given by $b_n=P_{B_n}(a_{n-1})$ and $a_n=P_{A_n}(b_n)$.  	
	Under appropriate geometrical and topological
	assumptions on the intersection of the limit sets, we ensure that the sequences $\{a_n\}$ and $\{b_n\}$ 
 converge in norm to a point in the intersection of $A$ and $B$. In particular, we consider both when the intersection $A\cap B$ reduces to a singleton and when the interior of $A \cap B$ is nonempty. Finally we consider the case in which the limit sets $A$ and $B$ are subspaces.
\end{abstract}

\maketitle

\section{Introduction}
The 2-sets convex  feasibility problem is the classical problem of finding
a point in the nonempty intersection of two closed and
convex sets $A$ and $B$ in a Hilbert space $X$ (see \cite[Section~4.5]{BorweinZhu} for some basic
results on this subject). Many efforts have been devoted to the study of algorithmic
procedures to solve convex feasibility problems, both from a
theoretical and from a computational point of view  (see, e.g.,
\cite{BauschkeBorwein,BCOMB,BorweinSimsTam,Censor,Hundal} and the 
references therein).
The method of alternating projections is the simplest iterative procedure for finding a solution and it goes back to von Neumann \cite{vonNeumann}: let us denote by $P_A$ and $P_B$ the projections on the sets $A$ and $B$, respectively, and, given a starting point $c_0\in X$, consider the {\em alternating projections sequences} $\{c_n\}$ and $\{d_n\}$ given  by $$d_n=P_{B}(c_{n-1})\ \ \text{and}\ \ c_n=P_{A}(d_n)\ \ \ \ \ (n\in\N).$$
In the case the sequences $\{c_n\}$ and $\{d_n\}$ converge in norm to a point in the intersection of $A$ and $B$, we say that the method of alternating projections converges.

Many concrete problems in applications
can be formulated as a convex feasibility problem. As typical
examples, we mention solution of convex inequalities, partial
differential equations, minimization of convex nonsmooth
functions, medical imaging, computerized tomography and image
reconstruction. For some details and other applications see, e.g.,
\cite{BauschkeBorwein} and the references therein.

Often in concrete applications data are affected by some
uncertainties. Hence stability  of  solutions of a convex feasibility problem with respect to data
perturbations is a desirable property, both from theoretical and computational point of view. In the present paper we investigate some
``stability'' properties of the alternating projections method in the following sense. Let us suppose that 
$\{A_n\}$ and $\{B_n\}$
are two sequences of closed convex sets such that $A_n\rightarrow
A$ and $B_n\rightarrow B$ for the Attouch-Wets {variational} convergence (see Definition~\ref{def:AW}) and let us introduce the definition of {\em perturbed alternating projections sequences}.

\begin{definition}\label{def:perturbedseq} Given $a_0\in X$, the {\em perturbed alternating projections sequences}  $\{a_n\}$ and $\{b_n\}$, w.r.t. $\{A_n\}$ and $\{B_n\}$ and with starting point $a_0$, are defined inductively by
	$$b_n=P_{B_n}(a_{n-1})\ \ \ \text{and}\ \ \  a_n=P_{A_n}(b_n) \ \ \ \ \ \ \ \ \ (n\in\N)$$ 
\end{definition}

\noindent Our aim is to find some conditions on the limit sets $A$ and $B$ that guarantee, for each choice of the sequences $\{A_n\}$ and $\{B_n\}$ and for each choice of  the starting point $a_0$, the convergence in norm
of the corresponding perturbed alternating projections sequences  $\{a_n\}$ and $\{b_n\}$. If this is the case, we say that the couple $(A,B)$ is {\em stable}.

The results reported in this paper can be seen as a continuation of  the research considered in \cite{DebeMiglMol}. However, compared with the notion of stability  studied in that paper,  the approach developed here seems to be more interesting also from a computational point of view since it does not require to find an exact solution of the ``perturbed problems'' (i.e. the problems given by the sets $A_n$ and $B_n$) but only to consider projections on the ``perturbed'' sets  $A_n$ and $B_n$. Moreover, the techniques used in the proofs are completely different from those of \cite{DebeMiglMol}. 

\smallskip

Clearly, in order  that the couple $(A,B)$ is stable, it is necessary that the alternating projections sequences $\{c_n\}$ and $\{d_n\}$ converge in norm (indeed, we can consider the particular case in which the sequences of sets $\{A_n\}$ and $\{B_n\}$ are given by $A_n=A$ and $B_n=B$, whenever $n\in\N$). Since, in general, this is not the case (see \cite{Hundal,MatouReich}), we shall restrict our attention to those situations in which the method of alternating projections converges. After some preliminaries, contained in Section~\ref{Notations and preliminaries}, we consider, in Sections~\ref{section:Infinite-dimensional}, \ref{Section:nonempty interior} and \ref{section:subspaces}, respectively, the following three cases:
\begin{enumerate}
	\item $A$ and $B$ are separated by a strongly exposing functional $f$ for the set $A$, i.e.,  there exist $x_0\in A\cap B$ and a linear continuous functional $f$ such that $\inf f(B)=f(x_0)=\sup f(A)$  and such that $f$ strongly exposes $A$ at $x_0$ (see Definition~\ref{def:strexp});
\item the intersection between $A$ and $B$ has nonempty interior;  
\item $A$ and $B$ are closed subspaces.
\end{enumerate}

Observe that if (i) is satisfied then the method of alternating projections converges. Indeed, by \cite[Lemma~4.5.11]{BorweinZhu} or by \cite[Theorem~1.4]{KopReichHilbert},  the alternating projections sequences $\{c_n\}$ and $\{d_n\}$ satisfy $\|c_n-d_n\|\to0$. Then it is easy to verify that $f(c_n),f(d_n)\to f(x_0)$ and hence, since $f$ strongly exposes $A$ at $x_0$, we have that $c_n,d_n\to x_0$ in norm. 

Similar assumption on the limit sets has been considered by the authors and E.~Molho in the recent paper \cite{DebeMiglMol}, in which they proved, among other things, that if (i) is satisfied and if $x_n\in A_n,\,y_n\in B_n$ are such that
$\|x_n-y_n\|$ coincides with the distance between $A_n$ and $B_n$ then   $x_n,y_n\to x_0$ in norm (see the proof of \cite[Theorem~4.5]{DebeMiglMol}).
In Section~\ref{section:Infinite-dimensional} of the present paper, we prove that if $A$ and $B$ are separated by a strongly exposing functional $f$ for the set $A$ then, for each choice of sequences $\{A_n\},\,\{B_n\}$ and starting point $a_0$, the corresponding perturbed alternating projections sequences  $\{a_n\}$ and $\{b_n\}$ converge in norm to $x_0$ (cf. Theorem~\ref{puntolur} below). In this case, our approach  is essentially based on suitable approximations of the sets $A_n$ and $B_n$ by convex and non-convex cones, respectively.
This result shed a new light also on the celebrated example of Hundal (see \cite{Hundal}) of a convex feasibility problem in a Hilbert space whose corresponding alternating projections sequences do not norm converge. There, $A$ is a convex cone and $B$ is a hyperplane touching the vertex of the  cone $A$; this hyperplane is defined by a functional that does not strongly expose the vertex of the cone. Our result prove that, if we consider a hyperplane defined by a functional strongly exposing the vertex of the cone, we obtain not only the norm convergence of the alternating projections, but also the convergence of the perturbed alternating projections, i.e., the couple $(A,B)$ is stable.
  

{In
	Section~\ref{Section:nonempty interior}, we investigate to what extent it is possible to guarantee convergence of the perturbed alternating projections in the case $A\cap B$ is nonempty but does not reduce to a singleton. 
	Example~\ref{ex: notconverge} show that, in general, even in the finite-dimensional setting and even if $A\cap B$ is bounded,  the couple $(A,B)$ may be not stable. On the other hand, Theorem~\ref{theorem:corpilur} ensures that the couple $(A,B)$ is stable whenever $\mathrm{int}\,(A\cap B)\neq \emptyset$. We point out that boundedness of $A \cap B$ is not required. Moreover, we apply the results of this section to investigate the convergence of perturbed alternating projections for the inequality constraints problem.
}

Finally the last section of the paper is devoted to the case (iii) where $A$ and $B$ are closed subspaces. The convex feasibility problem where $A$ and $B$ are subspaces  is the original problem studied  by von~Neumann. In his, now classical, theorem (see  \cite{vonNeumann}), he proved that the alternating projections sequences $\{c_n\}$ and $\{d_n\}$ converge in norm to $P_{A\cap B}(a_0)$. This theorem was rediscovered by several authors and many alternative proofs were provided (see, e.g., \cite{KopReich,KopReichHilbert} and the references therein). 
In Section~\ref{section:subspaces}, we study the problem of convergence of perturbed alternating projections sequences in the case in which $A$ and $B$ are subspaces.
Example~\ref{ex:duedimensionale} below shows that  even in the finite-dimensional setting it is conceivable that the perturbed projections sequences are unbounded in the case $A\cap B\neq\{0\}$. For this, in Section~\ref{section:subspaces}, we focus on the situation in which $A$ and $B$ are closed subspaces such that $A\cap B=\{0\}$. It turns out that if $A+B$ is a closed subspace then the couple $(A,B)$ is stable (Theorem~\ref{prop:sottospazisommachiusa}). On the other hand,  in Theorem~\ref{teo:sommaNONchiusa}, we provide a couple $(A,B)$ of closed subspaces such that $A\cap B=\{0\}$ and such that there exist sequences of sets $\{A_n\},\,\{B_n\}$ and starting point $a_0$ such that the corresponding perturbed projections sequences are unbounded. Our construction is based on the example, contained in \cite{FranchettiLight86}, of two subspaces of a Hilbert space with non-closed sum  such that the convergence of the corresponding alternating projection method is not geometric (for the definition of geometric convergence see \cite{FranchettiLight86}, see also  \cite{PusReichZas} for some results concerning the convergence rate of the alternating projection algorithm for the case of $n$  subspaces).

\section{Notations and preliminaries}\label{Notations and preliminaries}

Throughout all this paper, if not differently stated, $X$ denotes a real normed space with
the topological dual $X^*$. We
denote by $B_X$ and $S_X$ the closed unit ball and the unit sphere of $X$, respectively. 
For $x,y\in X$, $[x,y]$ denotes the closed segment in $X$ with
endpoints $x$ and $y$.
For a subset $K$ of $X$,
$\alpha>0$, and a functional $f\in \SX$ bounded on $K$, let
$$S(f,\alpha,K)=\{x\in
K;\, f(x)\geq\sup f(K)-\alpha\}$$ be the closed slice of $K$
given by $\alpha$ and $f$.

For $f\in\SX$ and $\alpha\in(0,1)$, we denote
$$C(f,\alpha)=\{x\in X; f(x)\geq \alpha\|x\|\},\
V(f,\alpha)=\{x\in X; f(x)\leq\alpha\|x\|\}.$$
It is easy to see that $C(f,\alpha)$ and
$V(f,\alpha)$ are nonempty closed cones and that $C(f,\alpha)$ is convex.

For a subset $A$ of $X$, we denote by $\inte(A)$, $\partial A$, $\conv(A)$ and
$\cconv(A)$ the interior, the boundary, the convex hull and the closed convex
hull of $A$, respectively. 
We
denote by $$\textstyle \mathrm{diam}(A)=\sup_{x,y\in A}\|x-y\|,$$
the (possibly infinite) diameter of $A$. For $x\in X$, let
$$\dist(x,A) =\inf_{a\in A} \|a-x\|.$$ Moreover, given $A,B$
nonempty subsets of $X$,  we denote by $\dist(A,B)$ the usual
``distance'' between $A$ and $B$, that is,
$$ \dist(A,B)=\inf_{a\in A} \dist(a,B).$$

\medskip 

Let us now introduce some definitions and basic properties concerning convergence of sets. By $\C(X)$ we denote the family of all nonempty closed subsets of
$X$. 
%
%
%
Let us introduce the (extended) Hausdorff metric $h$ on
$\C(X)$. For $A,B\in\C(X)$, we define the excess of $A$ over $B$
as

$$e(A,B) = \sup_{a\in A} \mathrm{dist}(a,B).$$

\noindent Moreover, if $A\neq\emptyset$ and $B=\emptyset$ we put
$e(A,B)=\infty$,  if $A=\emptyset$ we put $e(A,B)=0$. For $A,B\in \C(X)$, we define

$$h(A,B)=\max \bigl\{ e(A,B),e(B,A) \bigr\}.$$

\begin{definition} A sequence $\{A_j\}$ in $\C(X)$ is said to
Hausdorff converge to $A\in\C(X)$ if $$\textstyle \lim_j h(A_j,A)
= 0.$$
\end{definition}


Next we recall the definition of the so called Attouch-Wets convergence (see,
e.g., \cite[Definition~8.2.13]{LUCC}), which can be seen as a
localization of the Hausdorff convergence.  If $N\in\N$ and
$A,C\in\C(X)$, define
\begin{eqnarray*}
e_N(A,C) &=& e(A\cap N B_X, C)\in[0,\infty),\\
h_N(A,C) &=& \max\{e_N(A,C), e_N(C,A)\}.
\end{eqnarray*}

\begin{definition}\label{def:AW} A sequence $\{A_j\}$ in $\C(X)$ is said to
Attouch-Wets converge to $A\in\C(X)$ if, for each $N\in\N$,
$$\textstyle \lim_j h_N(A_j,A)= 0.$$
\end{definition}

Several times without mentioning it, we shall use
 the following two results.

\begin{theorem}[{see, e.g., \cite[Theorem~8.2.14]{LUCC}}]  The sequence of sets $\{A_n\}$  Attouch-Wets converges to $A$ if{f}
$$\textstyle \sup_{\|x\|\leq N}{|\mathrm{dist}(x,A_n)-\mathrm{dist}(x,A)|}\to 0
\ \ \ (n\to \infty),$$ 
 whenever $N\in\N$.
\end{theorem}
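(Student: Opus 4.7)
The plan is to prove the two implications separately. The reverse direction $(\Leftarrow)$ is essentially immediate: evaluating $|\dist(x,A_n) - \dist(x,A)|$ at points already lying in $A$ (resp.\ $A_n$) directly recovers the one-sided excesses $e_N(A,A_n)$ (resp.\ $e_N(A_n,A)$) from the definition of Attouch-Wets convergence. Concretely, for $a\in A\cap NB_X$ one has $\dist(a,A)=0$, so $\dist(a,A_n)=|\dist(a,A_n)-\dist(a,A)|\le \sup_{\|x\|\le N}|\dist(x,A_n)-\dist(x,A)|$; taking the supremum gives $e_N(A,A_n)\to 0$, and the symmetric argument applied to $a\in A_n\cap NB_X$ yields $e_N(A_n,A)\to 0$, whence $h_N(A_n,A)\to 0$.

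For the forward direction $(\Rightarrow)$, fix $N\in\N$ and $\epsilon>0$, and select a base point $a_0\in A$, so that $\dist(x,A)\le N+\|a_0\|$ for every $x$ with $\|x\|\le N$. Next I would pick a truncation radius $M$ sufficiently larger than $N+\|a_0\|$ so that any $\epsilon$-near-minimizer of $\dist(x,A)$ necessarily lies in $A\cap MB_X$. Attouch-Wets convergence at scale $M$ then delivers the one-sided estimate $\dist(x,A_n)\le \dist(x,A)+\epsilon+e_M(A,A_n)$ uniformly in $\|x\|\le N$. For $n$ large, this already makes $\dist(x,A_n)$ uniformly bounded in $\|x\|\le N$, which in turn allows near-minimizers in $A_n$ of $\dist(x,A_n)$ to be confined to $MB_X$ as well (enlarging $M$ a priori if necessary); applying Attouch-Wets convergence a second time yields the symmetric estimate via $e_M(A_n,A)$. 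Combining, $\sup_{\|x\|\le N}|\dist(x,A_n)-\dist(x,A)|\le \epsilon+h_M(A_n,A)$, and letting $n\to\infty$ and then $\epsilon\to 0$ concludes.

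The main subtle point I anticipate is the bookkeeping in the forward direction: the radius $M$ must be fixed before $n$, yet the reason near-minimizers in $A_n$ stay inside $MB_X$ is the one-sided bound that already uses Attouch-Wets convergence at scale $M$. This is not circular, but must be written out in the correct order — one first picks $M$ depending only on $N$, $\|a_0\|$ and $\epsilon$, derives the one-sided bound, deduces a uniform a priori bound on $\dist(x,A_n)$ for $n$ large, and only then extracts near-minimizers in $A_n\cap MB_X$. Everything else is a direct unwinding of the definitions of $e_N$ and $h_N$.
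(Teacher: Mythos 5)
Your proof is correct: both directions are handled properly, and the quantifier bookkeeping you flag in the forward direction is resolved in the right order (fix $M$ depending only on $N$, $\|a_0\|$ and $\epsilon$, get the one-sided bound, then confine near-minimizers in $A_n$ to $MB_X$ for large $n$). The paper itself gives no proof of this statement — it is quoted from Lucchetti's book — and your argument is essentially the standard proof of that cited equivalence, so there is nothing further to reconcile.
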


\begin{fact}\label{fact:AW} Let $A$ be a nonempty closed convex set in a Banach space $X$. Suppose that 
	$\{A_n\}$ is a sequence of closed convex sets such that $A_n\rightarrow
	A$ for the Attouch-Wets convergence. Then, if $\{a_n\}$ is a bounded sequence in $X$ such that $a_n\in A_n$ ($n\in\N$), we have that $\mathrm{dist}(a_n,A)\to 0$.
	
\end{fact}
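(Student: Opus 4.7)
The plan is to reduce the claim directly to the characterization of Attouch–Wets convergence stated in the theorem immediately preceding the fact. Essentially all the work is already done there; what remains is an observation about distance functions evaluated at points that lie in the perturbed sets.

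First I would use the boundedness of $\{a_n\}$ to choose $N\in\N$ with $\|a_n\|\leq N$ for every $n$. Since $a_n\in A_n$, the identity $\mathrm{dist}(a_n,A_n)=0$ holds trivially for each $n$. Then I would apply the cited characterization: Attouch–Wets convergence $A_n\to A$ gives
\[
\sup_{\|x\|\leq N}\bigl|\mathrm{dist}(x,A_n)-\mathrm{dist}(x,A)\bigr|\longrightarrow 0
\]
as $n\to\infty$. Specializing $x=a_n$, which is legal since $\|a_n\|\le N$, produces
\[
\mathrm{dist}(a_n,A)=\bigl|\mathrm{dist}(a_n,A_n)-\mathrm{dist}(a_n,A)\bigr|\longrightarrow 0,
\]
which is exactly the desired conclusion.

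There is no real obstacle: neither convexity of $A$ and $A_n$ nor completeness of $X$ enters the argument, these hypotheses being present only because the fact is stated in the generality in which it will later be used. The whole content is that uniform control of the distance functions on each bounded ball, together with the vanishing distance $\mathrm{dist}(a_n,A_n)=0$, forces $\mathrm{dist}(a_n,A)\to 0$.
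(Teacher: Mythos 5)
Your argument is correct, and it is essentially the intended one: the paper states the Fact without proof immediately after the characterization theorem precisely because this one-line deduction (take $N$ with $\|a_n\|\leq N$, use $\mathrm{dist}(a_n,A_n)=0$ and the uniform convergence of distance functions on $NB_X$) is all that is needed. One could equally well argue straight from the definition via $\mathrm{dist}(a_n,A)\leq e_N(A_n,A)\to 0$, but your route through the cited theorem is just as short and correct.
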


\medskip

\begin{definition}[{see, e.g., \cite[Definition~7.10]{FHHMZ}}]\label{def:strexp} Let $A$ be a nonempty subset of a normed space $X$. A point $a\in A$
	is called a strongly exposed point of $A$ if there exists  a
	support functional $f\in X^*\setminus\{0\}$ for $A$ in $a$ $\bigl($i.e.,
	$f (a) = \sup f(A)$$\bigr)$, such that  $x_n\to a$ for all sequences
	$\{x_n\}$ in $A$ such that $\lim_n f(x_n) = \sup f(A)$. In this
	case, we say that $f$ strongly exposes $A$ at $a$.
\end{definition}

\noindent Let us observe that $f\in S_{X^*}$ strongly exposes $A$ at $a$
if{f} $f(a)=\sup f(A)$ and
$$\mathrm{diam}\bigl(S(f,\alpha,A)\bigr)\to0 \text{ as } \alpha\to 0^+.$$

\noindent Let us
recall that a {\em body} in $X$ is a closed convex set in $X$ with
nonempty interior.

\begin{definition}[{see, e.g., \cite[Definition~1.3]{KVZ}}]
	Let $A\subset X$ be a body. We say that $x\in\partial A$ is an
	{\em LUR (locally uniformly rotund) point} of $A$ if for each
	$\epsilon>0$ there exists $\delta>0$ such that if $y\in A$
	and $\dist(\partial A,(x+y)/2)<\delta$ then $\|x-y\|<\epsilon$. 
\end{definition}

If $A=B_X$, the previous definition coincides with the standard definition of
local uniform rotundity of the norm at $x$.
We say that $A$ is an {\em LUR body} if each point in
$\partial A$ is an LUR point of $A$.

\begin{lemma}\label{slicelimitatoselur} Let $A$ be a body in $X$
	and suppose that $a\in\partial A$ is an LUR point of $A$. Then, if
	$f\in S_{X^*}$ is a support functional for $A$ in $a$, $f$
	strongly exposes $A$ at $a$.
\end{lemma}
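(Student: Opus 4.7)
The plan is to use the characterization of strongly exposing functionals recorded right after Definition~\ref{def:strexp}: it suffices to prove that $\mathrm{diam}\bigl(S(f,\alpha,A)\bigr)\to 0$ as $\alpha\to 0^+$. Since $a\in S(f,\alpha,A)$ for every $\alpha>0$, I will in fact show the stronger statement that $\|y-a\|\to 0$ uniformly over $y\in S(f,\alpha,A)$, and then the diameter bound follows by the triangle inequality. The natural bridge between the slice condition (controlling $f$-values) and the LUR condition (controlling distance of midpoints to $\partial A$) will be to show that for every $y\in S(f,\alpha,A)$,
\[
\dist\!\left(\tfrac{a+y}{2},\partial A\right)\le \alpha.
\]

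To prove this bound, fix $y\in S(f,\alpha,A)$. By convexity $\tfrac{a+y}{2}\in A$, and since $f$ supports $A$ at $a$ we have $f(a)=\sup f(A)$, so
\[
f\!\left(\tfrac{a+y}{2}\right)\;=\;\tfrac{f(a)+f(y)}{2}\;\ge\; f(a)-\tfrac{\alpha}{2}.
\]
Because $\|f\|=1$, I can choose $v\in X$ with $\|v\|\le 1$ and $f(v)>3/4$. Setting $z:=\tfrac{a+y}{2}+\alpha v$, a direct computation gives
\[
f(z)\;\ge\; f(a)-\tfrac{\alpha}{2}+\tfrac{3\alpha}{4}\;=\;f(a)+\tfrac{\alpha}{4}\;>\;f(a),
\]
so $z\notin A$ (any point of $A$ has $f$-value $\le f(a)$). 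Since $\tfrac{a+y}{2}\in A$ and $z\notin A$, the segment $[\tfrac{a+y}{2},z]$ must meet $\partial A$, and any such meeting point lies at distance at most $\|\alpha v\|\le \alpha$ from $\tfrac{a+y}{2}$. This proves the displayed bound.

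Now fix $\epsilon>0$. Applying the LUR property of $A$ at $a$ produces $\delta>0$ such that $y\in A$ and $\dist(\tfrac{a+y}{2},\partial A)<\delta$ force $\|y-a\|<\epsilon$. Choose $\alpha_0\in(0,\delta)$. Then for every $\alpha\in(0,\alpha_0)$ and every $y\in S(f,\alpha,A)$ the bound above gives $\dist(\tfrac{a+y}{2},\partial A)\le\alpha<\delta$, hence $\|y-a\|<\epsilon$. Since $a\in S(f,\alpha,A)$ itself, the triangle inequality yields $\mathrm{diam}\bigl(S(f,\alpha,A)\bigr)\le 2\epsilon$ for all such $\alpha$, and letting $\epsilon\to 0^+$ we conclude that $f$ strongly exposes $A$ at $a$.

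The only delicate step is transferring the slice condition $f(y)\ge f(a)-\alpha$ into the geometric statement that $\tfrac{a+y}{2}$ is close to $\partial A$; the key trick is picking a direction $v$ along which $f$ grows almost maximally (available because $\|f\|=1$), which lets a short displacement push the midpoint out of $A$ and therefore across $\partial A$.
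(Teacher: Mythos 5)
Your proof is correct: the perturbation of the midpoint by $\alpha v$ with $f(v)>3/4$ indeed pushes it outside $A$, so the segment argument gives $\dist\bigl(\tfrac{a+y}{2},\partial A\bigr)\le\alpha$, and combining this with the LUR condition and the slice characterization of strong exposedness stated after Definition~\ref{def:strexp} closes the argument without gaps. The paper itself does not prove Lemma~\ref{slicelimitatoselur} but only refers to \cite[Exercise~8.27]{FHHMZ} (ball case) and \cite[Lemma~4.3]{DebeMiglMol}; your argument is essentially the standard one behind those references, adapted correctly from the ball to a general body by using the displacement along $v$ to reach the boundary.
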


The lemma is well-known in the case the body is
a ball (see, e.g., \cite[Exercise~8.27]{FHHMZ}) and in the general
case the proof is similar (see, e.g., \cite[Lemma~4.3]{DebeMiglMol}).

The next lemma gives a characterization of those functionals $f$ that strongly expose a set $A$ in terms of containment of $A$ in translations of cones of the form $C(f,\alpha)$.  

\begin{lemma}\label{lemma:stronglyexposedVSopiccolo}
	Let $A$ be a convex set in $X$ such that $0\in A$. Let $f\in S_{X^*}$ be such that 
	$f (0) = \inf f(A)$ and let $x_0\in S_X$ be such that $f(x_0)=1$ . Let us consider $\epsilon:(0,1)\to[0,\infty]$ defined by
	$$\epsilon(\alpha)=\inf\{\lambda>0;\, A\subset C(f,\alpha)-\lambda x_0\}\ \ \ \ (0<\alpha<1).$$  
	Then $\epsilon(\alpha)$ is $o(\alpha)$ as $\alpha\to 0^+$ if{f} $({-f})$ strongly exposes $A$ at $0$.
\end{lemma}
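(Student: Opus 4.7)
My plan is to translate the geometric containment $A\subset C(f,\alpha)-\lambda x_0$ into an analytic inequality and then match it against the slice-diameter characterization of strong exposure. Since $f$ is linear, $f(0)=0$, so $\inf f(A)=0$ and $f\geq 0$ on $A$; and $a\in C(f,\alpha)-\lambda x_0$ unpacks to $f(a)+\lambda\geq \alpha\|a+\lambda x_0\|$. A one-line triangle-inequality computation (using $\alpha\leq 1$) shows this inequality is monotone in $\lambda$, so exhibiting the inclusion at a single $\lambda$ already bounds $\epsilon(\alpha)$ from above. On the other side, since $0\in A$ lies in every slice $\{f\leq\alpha\}$, the remark after Definition~\ref{def:strexp} reduces ``$(-f)$ strongly exposes $A$ at $0$'' to the equivalent condition $M(\alpha):=\sup\{\|a\|\colon a\in A,\ f(a)\leq\alpha\}\to 0$ as $\alpha\to 0^+$. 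Thus the lemma becomes $\epsilon(\alpha)=o(\alpha)\iff M(\alpha)\to 0$.

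For the easier $(\Leftarrow)$ implication I will assume $\epsilon(\alpha)=o(\alpha)$. Given $\delta>0$, for all sufficiently small $\alpha$ one has $\epsilon(\alpha)<\delta\alpha$, so $A\subset C(f,\alpha)-\delta\alpha x_0$ by monotonicity; combining the defining inequality with $\|a\|\leq \|a+\delta\alpha x_0\|+\delta\alpha$ yields
\[
\alpha\|a\|\leq f(a)+\delta\alpha+\delta\alpha^2, \qquad a\in A.
\]
For $a$ with $f(a)\leq\alpha'$, choosing $\alpha=\sqrt{\alpha'}$ gives $\|a\|\leq \sqrt{\alpha'}+\delta+\delta\sqrt{\alpha'}$, hence $\limsup_{\alpha'\to 0^+}M(\alpha')\leq\delta$. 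Arbitrariness of $\delta$ forces $M(\alpha')\to 0$.

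For the harder $(\Rightarrow)$ implication I will assume $M(\alpha)\to 0$, fix $\delta\in(0,1)$, and pick $\alpha$ small enough that $\alpha\leq 1/2$ and $M(\alpha)\leq\min(\delta/2,1)$; the task is to verify $\alpha\|a+\delta\alpha x_0\|\leq f(a)+\delta\alpha$ for every $a\in A$, which will give $\epsilon(\alpha)\leq \delta\alpha$. Set $\beta=f(a)\geq 0$: if $\beta\leq\alpha$, the slice bound gives $\|a\|\leq M(\alpha)$ directly, while if $\beta>\alpha$ then convexity of $A$ together with $0\in A$ lets me rescale to $(\alpha/\beta)a\in A$, which lies in the slice $\{f\leq \alpha\}$, yielding $\|a\|\leq (\beta/\alpha)M(\alpha)$. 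Plugging either bound into $\alpha\|a+\delta\alpha x_0\|\leq\alpha\|a\|+\delta\alpha^2$ together with $M(\alpha)\leq 1$ produces the desired inequality in both cases.

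The principal obstacle is precisely this last step: the strong-exposure hypothesis only controls points of $A$ close to the origin, whereas the required cone inclusion must hold uniformly on all of $A$, potentially including unbounded portions. The convexity rescaling $a\mapsto(\alpha/\beta)a$ is the device that transports the near-origin slice information into uniform quantitative control on the whole set, and identifying the need for it is the conceptual heart of the proof.
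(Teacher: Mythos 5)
Your proof is correct and follows essentially the same route as the paper: both directions hinge on rewriting $A\subset C(f,\alpha)-\lambda x_0$ as $f(a)+\lambda\geq\alpha\|a+\lambda x_0\|$, on the convexity-plus-$0\in A$ rescaling that transfers slice control near the origin to all of $A$, and on the $f(a)\leq\alpha^2$ (i.e.\ $\alpha=\sqrt{\alpha'}$) trick for deducing strong exposure from $\epsilon(\alpha)=o(\alpha)$. The only difference is cosmetic: you verify the cone inclusion directly with explicit constants where the paper argues by contraposition, extracting a sequence of norm about $M/2$ on which $f$ tends to $0$.
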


\begin{remark}\label{InfMin} Observe that if $\alpha\in(0,1)$ is such that $\epsilon(\alpha)$ is finite then, in the definition of the function $\epsilon$, the infimum is actually a minimum. Hence, in this case, we have that
	$A\subset C(f,\alpha)-\epsilon(\alpha) x_0.$
\end{remark}
\begin{proof}[Proof of Lemma~\ref{lemma:stronglyexposedVSopiccolo}] On the contrary, suppose that $\epsilon(\alpha)$ is not $o(\alpha)$ as $\alpha\to 0^+$, then there exist $M>0$ and $\alpha_n\to0^+$ such that $\epsilon(\alpha_n)>M\alpha_n$.
	Let $z_n\in A\setminus[C(f,\alpha_n)-M\alpha_n x_0]$ and observe that
	$$\textstyle f(z_n)+M{\alpha_n}=f(z_n+M\alpha_n x_0)<\alpha_n\|z_n+M\alpha_n x_0\|.$$
	Hence it holds
	$$\textstyle 0\leq f(z_n)<\alpha_n\|z_n+M\alpha_n x_0\|-M{\alpha_n}=\alpha_n(\|z_n+M\alpha_n x_0\|-{M}).$$
	Then $\|z_n+M\alpha_n x_0\|>{M}$ and hence eventually $\|z_n\|>\frac M2$.
	So, eventually we have
	$$\textstyle 0\leq f(\frac{z_n}{\|z_n\|})<\alpha_n\frac{\|z_n+M\alpha_n x_0\|-M}{\|z_n\|}\leq \alpha_n\frac{\|z_n\|+M\alpha_n-M}{\|z_n\|}\leq \alpha_n.$$
	In particular, we have $f(\frac{Mz_n}{2\|z_n\|})\to0$ as $n\to\infty$. Since $A$ is convex and $0\in A$, we have that eventually $\frac{Mz_n}{2\|z_n\|}\in A$, and hence that ${-f}$ does not strongly expose $A$ at $0$.

	For the other implication, suppose that $\epsilon(\alpha)$ is $o(\alpha)$ as $\alpha\to 0^+$. By Remark~\ref{InfMin},   we have that eventually (for $\alpha\to 0^+$) $\epsilon(\alpha)$ is finite and $$A\subset C(f,\alpha)-\epsilon(\alpha)x_0.$$  Let $x\in A\cap\{x\in X;\, f(x)\leq\alpha^2\}$, then eventually
	$$\alpha\|x+\epsilon(\alpha)x_0\|\leq f(x+\epsilon(\alpha)x_0)= f(x)+\epsilon(\alpha)f(x_0)\leq \alpha^2+\epsilon(\alpha)$$
	and hence $\|x\|\leq\frac{\epsilon(\alpha)}{\alpha}+\epsilon(\alpha)+\alpha$. This proves that $({-f})$ strongly exposes $A$ at $0$. 
\end{proof}

In the following two lemmas we analyse some relations between the  Attouch-Wets convergence of a sequence of sets and the containment of the sets of the sequence in a cone of the form $V(f,\alpha)$ or $C(f,\alpha)$.

\begin{lemma}\label{lemma:definitivamentenellaconca} Let  $B, B_n$ ($n\in\N$) be closed convex sets in $X$ such that $B_n\rightarrow
	B$ for the Attouch-Wets convergence, and $f\in S_{X^*}$. Suppose that $x_0\in S_X$ is such that $f(x_0)=1$ and suppose that $0\in B\subset \{x\in X;\, f(x)\leq 0\}$.
	Then, for each $\alpha\in(0,1)$ and $\epsilon>0$, there exists $n_0\in \N$ such that $B_n\subset V(f,\alpha)+\epsilon x_0$, whenever $n\geq n_0$.
\end{lemma}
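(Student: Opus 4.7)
The inclusion to be proved, unpacked using $f(x_0)=1$, is: for $n$ large, every $x\in B_n$ satisfies
$$f(x)-\epsilon\leq \alpha\|x-\epsilon x_0\|.$$
My plan is to control $f$ on $B_n$ using Attouch–Wets convergence: points of $B_n$ of bounded norm are close to $B\subset\{f\leq 0\}$, and for points of large norm we use convexity of $B_n$ to reduce to the bounded case.

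Fix $N=1$ (any fixed positive $N$ works) and a small $\delta\in(0,1)$ to be chosen later depending only on $\alpha$ and $\epsilon$. By Attouch–Wets convergence, there is $n_0$ such that $h_{N+1}(B_n,B)<\delta$ for $n\geq n_0$. Since $0\in B\cap (N+1)B_X$, for each such $n$ there exists $b_0\in B_n$ with $\|b_0\|<\delta$. Fix $x\in B_n$ and split two cases.

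If $\|x\|\leq N$, then $\mathrm{dist}(x,B)<\delta$, so there is $b\in B$ with $\|x-b\|<\delta$; since $f(b)\leq 0$ and $\|f\|\leq 1$, we get $f(x)\leq\delta$. If $\|x\|>N$, set $t=N/\|x\|\in(0,1)$ and $y=(1-t)b_0+tx$. By convexity $y\in B_n$, and $\|y\|\leq (1-t)\delta+N\leq N+1$. Therefore $\mathrm{dist}(y,B)<\delta$, and the same argument gives $f(y)\leq\delta$. Combining with $|f(b_0)|\leq\delta$ and $f(y)=(1-t)f(b_0)+tf(x)$, we deduce
$$f(x)\leq \frac{2\delta}{t}=\frac{2\delta\,\|x\|}{N}.$$

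Using $\|x-\epsilon x_0\|\geq \|x\|-\epsilon$, the desired inequality $f(x)-\epsilon\leq \alpha\|x-\epsilon x_0\|$ in the second case reduces to
$$(2\delta/N-\alpha)\|x\|\leq (1-\alpha)\epsilon,$$
which holds provided $2\delta\leq \alpha N$. In the first case, the estimate $f(x)\leq \delta$ trivially gives $f(x)-\epsilon\leq 0\leq \alpha\|x-\epsilon x_0\|$ as soon as $\delta\leq\epsilon$. Taking $\delta=\min\{\epsilon,\alpha/2,1\}$ (with $N=1$) closes the argument.

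The main obstacle is handling $x\in B_n$ of arbitrarily large norm, since Attouch–Wets convergence only directly controls $B_n$ inside bounded sets. The key device is to exploit convexity of $B_n$ and the uniform closeness of some $b_0\in B_n$ to $0\in B$: convex-combining $b_0$ with $x$ produces a point $y\in B_n\cap (N+1)B_X$ whose distance to $B$, and hence $f(y)$, is small. The resulting linear-in-$\|x\|$ bound on $f(x)$ is precisely what is needed to match the cone $V(f,\alpha)$ after the shift by $\epsilon x_0$.
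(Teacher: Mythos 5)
Your proof is correct and follows essentially the same route as the paper's: the key device in both is to convex-combine a large-norm point of $B_n$ with a point of $B_n$ near $0$ so as to land in a fixed ball, where Attouch--Wets convergence forces closeness to $B\subset\{x;\,f(x)\le 0\}$ and hence a bound on $f$ that is incompatible with lying outside $V(f,\alpha)+\epsilon x_0$. The paper phrases this as a contradiction argument via Fact~\ref{fact:AW}, while you run the same estimate directly with explicit constants; the difference is only presentational.
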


\begin{proof}
	On the contrary, suppose that there exists a sequence of integers $\{n_k\}$ such that, for each $k\in\N$, there exists $$b_{n_k}\in B_{n_k}\setminus[V(f,\alpha)+\epsilon x_0].$$
	Since
	$$\mathrm{dist}(B, C(f,\alpha)+\epsilon x_0)>0,$$
	by Fact~\ref{fact:AW}, we can suppose without any loss of generality that $\|b_{n_k}\|\geq1$ ($k\in\N$).
		Since $b_{n_k}\not\in V(f,\alpha)+\epsilon x_0$, we have 
	$$f(b_{n_k})>\alpha\|b_{n_k}-\epsilon x_0\|+\epsilon\geq\alpha\|b_{n_k}\|.$$
	Let $\delta=\min\{\epsilon,\alpha/2\}$, since $0\in B$ and $B_n\rightarrow
	B$ for the Attouch-Wets convergence, we can suppose without any loss of generality that, for each $k\in\N$, there exists $d_k\in (\delta B_X)\cap B_{n_k}$. Let 
	$$\textstyle w_k=\frac1{\|b_{n_k}\|}b_{n_k}+\frac{\|b_{n_k}\|-1}{\|b_{n_k}\|}d_k\in B_{n_k},$$
	and observe that $\|w_k\|\leq1+\epsilon$. Moreover, we have
	$$\textstyle f(w_k)\geq f(b_{n_k})\frac1{\|b_{n_k}\|}-\|d_k\|\geq \alpha -\|d_k\|\geq \frac\alpha2.$$
	Since $\{w_k\}$ is a bounded sequence, by Fact~\ref{fact:AW}, $\mathrm{dist}(w_k,B)\to0$.
	Hence we get a contradiction since $\{w_k\}\subset \{x\in X;\, f(x)\geq\alpha/2\}$  and 
	$$\mathrm{dist}(B, \{x\in X;\, f(x)\geq\alpha/2\})>0.$$
\end{proof}

\begin{lemma}\label{lemma:definitivamentenelcono} Let  $A, A_n$ ($n\in\N$) be closed convex sets in $X$ such that $A_n\rightarrow
	A$ for the Attouch-Wets convergence, $f\in S_{X^*}$, $\alpha\in(0,1)$, and $\epsilon>0$. Suppose that $x_0\in S_X$ is such that $f(x_0)=1$ and suppose that $0\in A\subset C(f,\alpha)-\epsilon x_0$.
	Then, for each $\beta\in(0,\alpha)$ and $\epsilon'>\epsilon$, there exists $n_0\in \N$ such that $A_n\subset C(f,\beta)-\epsilon' x_0$, whenever $n\geq n_0$.
\end{lemma}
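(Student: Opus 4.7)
The plan is to argue by contradiction, mirroring the structure of the proof of Lemma~\ref{lemma:definitivamentenellaconca}, but with an extra twist to handle the fact that $C(f,\beta)-\epsilon' x_0$ is not scale-invariant. Suppose the conclusion fails; then there exist a subsequence $\{n_k\}$ and points $a_{n_k}\in A_{n_k}$ with
$$f(a_{n_k})+\epsilon'<\beta\|a_{n_k}+\epsilon' x_0\|.$$
Since $0\in A$ and $A_n\to A$ for the Attouch-Wets convergence, one can also select $d_k\in A_{n_k}$ with $\|d_k\|\to 0$. The argument then splits according to whether $\{a_{n_k}\}$ is bounded or not.

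In the bounded case, Fact~\ref{fact:AW} supplies $b_k\in A$ with $\|b_k-a_{n_k}\|\to 0$. The containment $A\subset C(f,\alpha)-\epsilon x_0$ gives $f(b_k)+\epsilon\geq\alpha\|b_k+\epsilon x_0\|$, and passing this estimate from $b_k$ to $a_{n_k}$ modulo $o(1)$ errors and combining with the contradiction assumption yields an inequality essentially of the form
$$(\alpha-\beta)\|a_{n_k}+\epsilon' x_0\|+(1-\alpha)(\epsilon'-\epsilon)\leq o(1),$$
which is impossible because both $\alpha-\beta$ and $\epsilon'-\epsilon$ are strictly positive.

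The main work is in the unbounded case $\|a_{n_k}\|\to\infty$, where one needs to transport the hypothesis on $A$ to the asymptotic behaviour of the $a_{n_k}$'s. The key device is to fix in advance a parameter $M>(1+\alpha)\epsilon/(\alpha-\beta)$ and to consider the convex combination
$$w_k=\frac{M}{\|a_{n_k}\|}a_{n_k}+\Bigl(1-\frac{M}{\|a_{n_k}\|}\Bigr)d_k\in A_{n_k},$$
whose norm is at most $M+\|d_k\|$. Applying Fact~\ref{fact:AW} to the bounded sequence $\{w_k\}$ yields $v_k\in A$ with $\|v_k-w_k\|\to 0$, and the hypothesis $A\subset C(f,\alpha)-\epsilon x_0$ gives the bound $f(v_k)\geq\alpha\|v_k\|-(1+\alpha)\epsilon$. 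Using $\|v_k\|\geq M-\|d_k\|-\|v_k-w_k\|$ together with the elementary estimate $f(w_k)\leq(M/\|a_{n_k}\|)f(a_{n_k})+\|d_k\|$ coming from the definition of $w_k$, one obtains
$$f(a_{n_k})\geq\alpha\|a_{n_k}\|-\frac{\|a_{n_k}\|}{M}\bigl[(1+\alpha)\epsilon+o(1)\bigr].$$
Comparing this with the contradiction assumption $f(a_{n_k})<\beta\|a_{n_k}\|+(\beta-1)\epsilon'<\beta\|a_{n_k}\|$ and using the specific choice of $M$ yields a contradiction for $k$ sufficiently large.

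The obstacle I expect to be the main one is precisely this unbounded case: because $C(f,\beta)-\epsilon' x_0$ is not invariant under positive scaling around $0$, one cannot simply rescale $a_{n_k}$ to a bounded point while preserving the ``outside-the-cone'' property. The convexity trick with a carefully chosen, large but fixed scaling parameter $M$ is what allows us to extract the asymptotic direction of $a_{n_k}$ from an Attouch-Wets neighbour of a bounded surrogate $w_k$, while simultaneously keeping the perturbation from $d_k$ and the Attouch-Wets error $\|v_k-w_k\|$ negligible compared to the gap $\alpha-\beta$.
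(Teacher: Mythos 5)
Your proposal is correct and follows essentially the same route as the paper's proof: argue by contradiction, dispose of bounded offenders via Fact~\ref{fact:AW} and the quantitative separation between $C(f,\alpha)-\epsilon x_0$ and the region $f(x)<\beta\|x\|-$shift, and handle the unbounded ones by forming the convex combination at a fixed scale $M$ with a nearly-zero element of $A_{n_k}$ and applying Fact~\ref{fact:AW} to that bounded surrogate. The only differences are cosmetic bookkeeping (the paper absorbs your bounded case into a WLOG step and phrases the final contradiction as $\|b_k\|\leq\frac{2\epsilon'}{\alpha-\gamma}<M-\theta$ rather than as a bound on $f(a_{n_k})$ versus $\beta\|a_{n_k}\|$).
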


\begin{proof}
	
	Suppose on the contrary that there exists a sequence of integers $\{n_k\}$ such that, for each $k\in\N$, there exists $$a_{n_k}\in A_{n_k}\setminus[C(f,\beta)-\epsilon' x_0].$$
	Since $a_{n_k}+\epsilon' x_0\not\in C(f,\beta)$, we have 
\begin{equation}\label{eq:fuoridalcono}  f(a_{n_k}+\epsilon' x_0)=f(a_{n_k})+\epsilon'<\beta\|a_{n_k}+\epsilon' x_0\|.
\end{equation}	
			Fix any $\gamma\in(\beta,\alpha)$ and let $M\geq1$ be such that $M>\frac{2\epsilon'}{\alpha-\gamma}$. Finally, let $\theta\in (0,1)$ be such that
	\begin{enumerate}
		\item[(a)] $M-\theta>\frac{2\epsilon'}{\alpha-\gamma} $; 
\item[(b)]	$\frac{\beta M+\theta}{M-\theta}\leq\gamma$.
\end{enumerate}
Since 
$$\mathrm{dist}\bigl(C(f,\alpha)-\epsilon x_0,V(f,\beta)-\epsilon' x_0\bigr)>0,$$
by Fact~\ref{fact:AW}, we can suppose without any loss of generality that $\|a_{n_k}\|\geq M$ ($k\in\N$).
Moreover, since $0\in A$ and $A_n\rightarrow
A$ for the Attouch-Wets convergence, we can suppose without any loss of generality that, for each $k\in\N$, there exists $c_k\in A_{n_k}\cap\theta B_X$. 
Put, for each $k\in\N$, 
$$\textstyle b_k=\frac M{\|a_{n_k}\|}a_{n_k}+\frac{\|a_{n_k}\|-M}{\|a_{n_k}\|}c_k\in A_{n_k},$$
and observe that $M-\theta\leq\|b_k\|\leq M+\theta$.
Now, by (\ref{eq:fuoridalcono}), we have $f(a_{n_k})<\beta\|a_{n_k}\|$ and hence
$$f(b_k)\leq M\beta+\theta\leq\|b_k\|\frac{M\beta+\theta}{\|b_k\|}\leq\frac{M\beta+\theta}{M-\theta}\|b_k\|\leq\gamma\|b_k\|.$$
Moreover, since $\{b_k\}$ is bounded and $A\subset C(f,\alpha)-\epsilon x_0$, by Fact~\ref{fact:AW}, we have that eventually
$f(b_k)\geq\alpha\|b_k\|-2\epsilon'$ and hence that
$$\alpha\|b_k\|-2\epsilon'\leq f(b_k)\leq\gamma \|b_k\|.$$
In particular, we have that eventually $\|b_k\|\leq\frac{2\epsilon'}{\alpha-\gamma}<M-\theta$, a contradiction since $\|b_k\|\geq M-\theta$.   
\end{proof}

%
%

\section{The case where the intersection of limits sets is a singleton}\label{section:Infinite-dimensional}

In the sequel of the paper, we suppose that $X$ is a real Hilbert space. If $u,v\in X\setminus\{0\}$, we denote as usual
$$\textstyle\cos(u,v)=\frac{\langle u,v\rangle}{\|u\|\|v\|},$$
where $\langle u,v\rangle$ denotes the inner product between $u$ and $v$. 

If $K$ is a nonempty closed convex subset of $X$, let us denote by $P_K$ the projection onto the set $K$.
Several times without mentioning it, we shall use the variational characterization of best approximations from convex sets in Hilbert spaces: let $K$ be as above, $x\in X$ and $y_0\in K$, then 
$y_0=P_K(x)$ if and only if 
\begin{equation}\label{eq:proiezionesuconvesso}
\langle x-y_0,y-y_0\rangle\leq0 \ \ \ \ \text{whenever} \ y\in K.
\end{equation}
It is easy to see that,  if $x\not\in K$, (\ref{eq:proiezionesuconvesso}) is equivalent to the following condition:
\begin{equation}\label{eq:proiezionecoseno}
\|y-y_0\|\leq \|x-y\|\cos(y_0-y,x-y) \ \ \ \ \text{whenever} \ y\in K\setminus\{y_0\}.
\end{equation}
Moreover, if $K$ is a subspace of $X$ then (\ref{eq:proiezionesuconvesso}) becomes
\begin{equation}\label{eq:proiezionesusottospazio}
\langle x-y_0,y-y_0\rangle=0 \ \ \ \ \text{whenever} \ y\in K.
\end{equation}

Let us recall the definition of stability for a couple $(A,B)$ of subsets of $X$.

\begin{definition}
	Let  $A$ and $B$ be closed convex subsets of $X$ such that $A\cap B$ is nonempty. We say that the that
	the couple $(A,B)$ is {\em stable} if for each choice  of sequences $\{A_n\},\{B_n\}\subset\C(X)$ converging for the Attouch-Wets convergence to $A$ and $B$, respectively,   and for each choice of  the starting point $a_0$, the  corresponding perturbed alternating projections sequences  $\{a_n\}$ and $\{b_n\}$ converge in norm.
\end{definition}

\begin{remark}
We remark that in the above definition we can equivalently require that there exists $c\in A\cap B$ such that $a_n,b_n\to c$ in norm.
\end{remark}

\begin{proof}
	It suffices to  prove that if the perturbed alternating projections sequences  $\{a_n\}$ and $\{b_n\}$ converge in norm then they both converge to a point in $A\cap B$. 
	
	Let us start by proving that if $a_n\to a$ then $a\in A\cap B$. It is not difficult to prove that, since $$a_{n+1}= P_{A_n}P_{B_n}a_n=P_{A}P_{B}a_n+(P_{A_n}P_{B_n}-P_{A}P_{B})a_n$$  and since $A_n\to A,B_n\to B$  for the Attouch-Wets convergence, we have $a=P_AP_B a$. By \cite[Facts~1.1, (ii)]{BauschkeBorwein93}, we have that $a\in A\cap B$. Similarly, it is easy to see that 
	$$b_{n+1}=P_{B_n}a_n=P_{B}a_n+(P_{B_n}-P_{B})a_n\to P_Ba=a,$$
	and the proof is concluded.

\end{proof}

The main aim of this section is to prove that under the assumption that the sets
$A$ and $B$ are separated by a strongly exposing functional $f$  for the set $A$ (i.e. condition (i) in the introduction) the couple $(A,B)$ is stable.
The following theorem is the main result of this section.

\begin{theorem}\label{puntolur} Let $X$ be a Hilbert space and $A,B$
 nonempty closed convex subsets of $X$. Let $\{A_n\}$ and $\{B_n\}$
be two sequences of closed convex sets such that $A_n\rightarrow
A$ and $B_n\rightarrow B$ for the Attouch-Wets convergence. Suppose that there exist $y\in A\cap B$ and a linear continuous functional $f\in S_{X^*}$ such that $\inf f(B)=f(y)=\sup f(A)$  and such that $f$ strongly exposes $A$ at $y$.
Then, for each $a_0\in X$, the corresponding perturbed alternating projections sequences  $\{a_n\}$ and $\{b_n\}$ (with starting point $a_0$), converge to $y$ in norm.
\end{theorem}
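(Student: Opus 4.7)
The plan is to reduce to the case $y=0$ and set $g:=-f\in S_{X^*}$, so that $A\subset\{g\geq 0\}$, $B\subset\{g\leq 0\}$, $0\in A\cap B$, and $f=-g$ strongly exposes $A$ at $0$; let $x_0\in S_X$ be the unit vector with $g(x_0)=1$. By Lemma~\ref{lemma:stronglyexposedVSopiccolo}, the function $\epsilon(\alpha):=\inf\{\lambda>0\colon A\subset C(g,\alpha)-\lambda x_0\}$ satisfies $\epsilon(\alpha)=o(\alpha)$ as $\alpha\to 0^+$, and Lemmas~\ref{lemma:definitivamentenelcono} and~\ref{lemma:definitivamentenellaconca} transfer the cone containments to the perturbations: for any $\beta\in(0,\alpha)$, $\epsilon'>\epsilon(\alpha)$, $\alpha''\in(0,1)$, $\delta>0$, there exists $n_0$ such that for $n\geq n_0$,
\[
a\in A_n\Rightarrow g(a)\geq\beta\|a\|-2\epsilon',\qquad b\in B_n\Rightarrow g(b)\leq\alpha''\|b\|+2\delta.
\]

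Given any tolerance $\tau>0$, I would choose parameters with $\beta>\alpha''$ and $R^*:=4(\epsilon'+\delta)/(\beta-\alpha'')<\tau$; this is possible because $\epsilon(\alpha)/\alpha\to 0$ allows $\epsilon'/(\beta-\alpha'')$ to be made arbitrarily small (take $\beta$ close to $\alpha$ and $\epsilon'$ close to $\epsilon(\alpha)$), while $\alpha''$ and $\delta$ are free small parameters. Combining the $1$-Lipschitz bound $\|a-b\|\geq g(a)-g(b)$ with the two cone estimates and the triangle inequality produces the \emph{separation inequality}: for any $a\in A_n$ and $b\in B_n$ eventually,
\[
\|a-b\|\geq\rho\|a\|\ \text{whenever}\ \|a\|\geq R^*,\qquad \|a-b\|\geq\rho'\|b\|\ \text{whenever}\ \|b\|\geq R^*,
\]
with $\rho:=(\beta-\alpha'')/[2(1+\alpha'')]$ and $\rho':=(\beta-\alpha'')/[2(1+\beta)]$.

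Next, pick $\tilde y_n\in A_n$ and $\hat y_n\in B_n$ with $\|\tilde y_n\|,\|\hat y_n\|\to 0$; they exist by Attouch--Wets convergence since $0\in A\cap B$. The variational characterisation of the projection $b_n=P_{B_n}(a_{n-1})$ tested at $\hat y_n$ yields the sharp Fej\'er inequality
\[
\|b_n\|^2+(\|a_{n-1}-b_n\|-\|\hat y_n\|)^2\leq\|a_{n-1}\|^2+\|\hat y_n\|^2,
\]
and symmetrically for $a_n=P_{A_n}(b_n)$. Plugging the separation inequality into these two Fej\'er bounds (once $\|\hat y_n\|,\|\tilde y_n\|$ are small compared with $\rho R^*, \rho' R^*$) yields, in the regime $\|a_{n-1}\|,\|b_n\|\geq R^*$, the squared-norm contraction $\|a_n\|^2\leq q\|a_{n-1}\|^2+c_n$ with $q:=(1-\rho^2/4)(1-\rho'^2/4)<1$ and $c_n\to 0$; in the remaining regimes a direct Fej\'er estimate gives $\|a_n\|^2\leq(R^*)^2+c_n$. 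Combining,
\[
\|a_n\|^2\leq\max\bigl(q\|a_{n-1}\|^2,(R^*)^2\bigr)+c_n.
\]
A standard fixed-point argument on this recurrence forces $\limsup\|a_n\|\leq R^*<\tau$, and the same analysis gives $\limsup\|b_n\|\leq R^*<\tau$. Since $\tau$ was arbitrary, $a_n,b_n\to 0=y$ in norm.

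The heart of the argument is the separation-contraction mechanism: the shifted cones $C(g,\beta)-\epsilon' x_0$ and $V(g,\alpha'')+\delta x_0$ force a positive angular gap between the perturbed sets $A_n$ and $B_n$ away from a $R^*$-neighbourhood of $0$, which the Hilbert-space Fej\'er identity converts into a genuine norm contraction factor $q<1$ on the iterates. The main technical care will be in treating the different regimes (according to whether $\|a_{n-1}\|$ and $\|b_n\|$ lie above or below $R^*$) and in verifying that the perturbation terms $\|\hat y_n\|,\|\tilde y_n\|$ become dominated by $\rho\|a_{n-1}\|$ and $\rho'\|b_n\|$ so that the contraction actually engages. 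The strong-exposing hypothesis enters only at the very end, through the $o(\alpha)$ decay of $\epsilon(\alpha)$, which is precisely what allows $R^*$ to be made smaller than any prescribed $\tau$.
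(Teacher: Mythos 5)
Your proposal is correct, and it shares the paper's overall skeleton: translate so that $y=0$, use Lemma~\ref{lemma:stronglyexposedVSopiccolo} together with Lemmas~\ref{lemma:definitivamentenelcono} and~\ref{lemma:definitivamentenellaconca} to trap $A_n$ and $B_n$ in translated cones, pick nearly-zero points of $A_n,B_n$, show a geometric decay of the iterates' norms while they stay outside a small ball, and let the radius of that ball shrink to $0$ thanks to the $o(\alpha)$ behaviour of $\epsilon(\alpha)$. Where you genuinely diverge is in the quantitative engine that converts the cone separation into a contraction. The paper works with angles: it shows $a_n-x_n\in C(\tfrac53\theta)$, $b_n-x_n\in V(\tfrac43\theta)$, applies Fact~\ref{fact:coseno} and the cosine form \eqref{eq:proiezionecoseno} of the projection characterization to get $\|a_n\|\leq\cos(\tfrac16\theta)\|b_n\|$ (and symmetrically), and then runs a case analysis showing the iterates eventually enter and remain in $2MB_X$. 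You instead extract from the cone inclusions the linear-functional gap $\|a-b\|\geq\rho\|a\|$ (resp.\ $\rho'\|b\|$) away from the ball of radius $R^*$, and combine it with an approximate Fej\'er inequality $\|b_n\|^2+(\|a_{n-1}-b_n\|-\|\hat y_n\|)^2\leq\|a_{n-1}\|^2+\|\hat y_n\|^2$; this inequality is indeed valid, since \eqref{eq:proiezionesuconvesso} gives $\langle a_{n-1}-b_n,b_n\rangle\geq\langle a_{n-1}-b_n,\hat y_n\rangle\geq-\|a_{n-1}-b_n\|\,\|\hat y_n\|$, and expanding $\|a_{n-1}\|^2=\|a_{n-1}-b_n\|^2+2\langle a_{n-1}-b_n,b_n\rangle+\|b_n\|^2$ yields it. The resulting recurrence $\|a_n\|^2\leq\max\bigl(q\|a_{n-1}\|^2,(R^*)^2\bigr)+c_n$ with $q<1$, $c_n\to0$ forces $\limsup\|a_n\|\leq R^*$, which replaces the paper's ``trapped in $2MB_X$'' bookkeeping by a cleaner limsup fixed-point argument; your parameter bookkeeping ($R^*=4(\epsilon'+\delta)/(\beta-\alpha'')$ made arbitrarily small because $\epsilon(\alpha)=o(\alpha)$) plays exactly the role of the paper's conditions (a)--(d). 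In short: same architecture and same preliminary lemmas, but an angle-free, Fej\'er-monotonicity implementation of the contraction step, which trades the paper's trigonometric estimates for a Cauchy--Schwarz argument and is, if anything, slightly more robust to the error terms coming from the perturbed sets.
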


Before starting with the proof of the theorem we need some preliminary work. First of all, let us observe that without any loss of generality we can suppose that $y=0$ and hence that $$\inf f(A)=f(0)=\sup f(B).$$

Suppose that $x_0\in S_X$ is such that $f(x_0)=1$, i.e., $f$ is represented by $x_0$, in the sense that $f(\cdot)=\langle x_0,\cdot\rangle$. Then it is straightforward to give the following representation of the cones $C(f,\alpha)$ and $V(f,\alpha)$, introduced  at the beginning of Section~\ref{Notations and preliminaries}: if we define 
$$\textstyle C(\theta):=\{x\in X\setminus\{0\};\, \cos(x,x_0)\geq\sin(\theta)\}\cup\{0\}\ \ \ (\theta\in(0,\frac\pi2)),$$ 
 then  the set $C(\theta)$  coincides with $C(f,\sin\theta)$. 
Similarly, if we define 
$$\textstyle V(\theta):=\{x\in X\setminus\{0\};\, \cos(x,x_0)\leq\sin(\theta)\}\cup\{0\}\ \ \ (\theta\in(0,\frac\pi2)),$$ 
 then the set $V(\theta)$ coincides with $V(f,\sin\theta)$. We shall need the following simple fact.

\begin{fact}\label{fact:coseno} Suppose that $\theta_1,\theta_2\in(0,\frac\pi2)$ are such that $\theta_1<\theta_2$. If $x\in C(\theta_2)\setminus\{0\}$ and $y\in V(\theta_1)\setminus\{0\}$ then $\cos(x,y)\leq\cos(\theta_2-\theta_1)$.
\end{fact}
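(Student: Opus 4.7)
The plan is to reduce the fact to a two-dimensional trigonometric identity via an orthogonal decomposition with respect to $x_0$. The hypothesis translates directly into constraints on the angles that $x$ and $y$ make with $x_0$, and the conclusion is exactly the reverse triangle-type inequality on these angles.

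Concretely, I would first normalize: set $u=x/\norm{x}$, $v=y/\norm{y}$, so that $\cos(x,y)=\langle u,v\rangle$. Since $f$ is represented by $x_0$ via $f(\cdot)=\langle x_0,\cdot\rangle$ and $\norm{x_0}=1$, the hypotheses $x\in C(\theta_2)$ and $y\in V(\theta_1)$ unwind to
\[
\langle u,x_0\rangle\geq\sin\theta_2>0,\qquad \langle v,x_0\rangle\leq\sin\theta_1.
\]
Write $\alpha=\arccos\langle u,x_0\rangle\in[0,\pi/2-\theta_2]$ and $\beta=\arccos\langle v,x_0\rangle\in[\pi/2-\theta_1,\pi]$, using that $\arccos$ is decreasing. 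Note $\sin\alpha,\sin\beta\geq 0$ since both angles lie in $[0,\pi]$.

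Next I would decompose $u=\langle u,x_0\rangle x_0+u'$ and $v=\langle v,x_0\rangle x_0+v'$ with $u',v'\perp x_0$, observing that $\norm{u'}=\sin\alpha$ and $\norm{v'}=\sin\beta$. Expanding the inner product and applying Cauchy--Schwarz to the orthogonal parts gives
\[
\langle u,v\rangle=\langle u,x_0\rangle\langle v,x_0\rangle+\langle u',v'\rangle\leq\cos\alpha\cos\beta+\sin\alpha\sin\beta=\cos(\beta-\alpha).
\]

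Finally, from the constraints on $\alpha$ and $\beta$ one has
\[
\beta-\alpha\geq\bigl(\tfrac{\pi}{2}-\theta_1\bigr)-\bigl(\tfrac{\pi}{2}-\theta_2\bigr)=\theta_2-\theta_1>0,
\]
and moreover $\beta-\alpha\leq\pi$. Since $\cos$ is strictly decreasing on $[0,\pi]$, this yields $\cos(\beta-\alpha)\leq\cos(\theta_2-\theta_1)$, concluding the proof. The only delicate point is checking that $\beta-\alpha$ lies in $[0,\pi]$ so that monotonicity of cosine applies in the right direction; the positivity of $\sin\theta_2$ (which forces $\alpha\leq\pi/2-\theta_2<\pi/2$) together with $\beta\leq\pi$ handles this, so I do not anticipate any genuine obstacle beyond this bookkeeping.
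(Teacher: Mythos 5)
Your proof is correct and follows essentially the same route as the paper: both decompose $x$ and $y$ orthogonally along $x_0$, bound the inner product of the orthogonal components by Cauchy--Schwarz to get $\cos(x,y)\leq\cos(\beta-\alpha)$, and conclude by monotonicity of cosine on $[0,\pi]$. The only difference is cosmetic (the paper works with the complementary angles $\theta_z=\frac{\pi}{2}-\arccos\cos(z,x_0)$ and is terser about the range check that you spell out explicitly).
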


\begin{proof}
For $z\in X\setminus\{0\}$ let us denote $\theta_z=\frac\pi2-\arccos \cos(z,x_0)$ and observe that
$$z\in C(\theta_2)\Leftrightarrow \theta_z\geq\theta_2\ \ \ \text{and}\ \ \ z\in V(\theta_1)\Leftrightarrow \theta_z\leq\theta_1.$$ 
Let us define $x_1=x-f(x)x_0$ and $y_1=y-f(y)x_0$, then

$$\textstyle \cos(x,y)\leq\frac{f(x)f(y)}{\|x\|\|y\|}+\frac{\|x_1\|\|y_1\|}{\|x\|\|y\|}=\cos(\theta_x-\theta_y)\leq\cos(\theta_2-\theta_1).$$
\end{proof}

\begin{proof}[Proof of Theorem~\ref{puntolur}]

Fix $M>0$, it suffices to prove that  the sequences $\{a_n\}$ and $\{b_n\}$ are eventually contained in $2M B_X$.
Let  $f\in S_{X^*}$ and $x_0\in X$ be as above.  Let $\alpha\in (0,1)$ and let
$$\epsilon(\alpha)=\inf\{\lambda>0;\, A\subset C(f,\alpha)-\lambda x_0\}\in[0,\infty],$$ 
 by Lemma~\ref{lemma:stronglyexposedVSopiccolo}, $\epsilon(\alpha)$ is $o(\alpha)$ as $\alpha\to 0^+$. In particular, we can fix $\beta\in(0,1/3)$ such that if  $\theta=\frac12\arcsin (2\beta)$ then  $\epsilon':=2\epsilon(3\beta)\in\R$ and
\begin{enumerate}
	\item[(a)] $\epsilon'\leq M/2$;
	\item[(b)] $\sin\theta+\frac8M\epsilon'\leq\sin(\frac43\theta)$;
	\item[(c)] $\sin(2\theta)-\frac8M{\epsilon'}\geq\sin(\frac53\theta)$;
	\item[(d)] $\cos(\frac13\theta)+\frac2M{\epsilon'}\leq\cos(\frac16\theta)$.
\end{enumerate}  
 Since, by Remark~\ref{InfMin}, $0\in A\subset C(f,3\beta)-\epsilon(3\beta) x_0$, by Lemma~\ref{lemma:definitivamentenelcono}, we have that eventually $$A_n\subset C(f,2\beta)-2\epsilon(3\beta) x_0=C(2\theta)-\epsilon' x_0.$$ 	
Since, $0\in B\subset\{x\in X;\, f(x)\leq0\}$, by Lemma~\ref{lemma:definitivamentenellaconca}, we have that eventually $$B_n\subset V(\theta)+\epsilon' x_0.$$ 	 
Since $0\in A\cap B$, $A_n\rightarrow
A$ and $B_n\rightarrow B$ for the Attouch-Wets convergence, eventually there exist  $x_n\in A_n\cap \epsilon'B_X$ and $y_n\in B_n\cap \epsilon'B_X$. 

\begin{claim} Eventually, if $a_n,b_n,b_{n+1}\not\in M B_X$, the following conditions hold:
\begin{enumerate}
	\item  $a_n-x_n\in C(\frac53\theta)$;
	\item $b_n-x_n\in V(\frac43\theta)$;
	\item $a_n-y_{n+1}\in C(\frac53\theta)$;
	\item $b_{n+1}-y_{n+1}\in V(\frac43\theta)$.
\end{enumerate}
\end{claim}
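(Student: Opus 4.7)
All four inclusions follow a single template. For a point $u$ of norm at least $M$ (namely $a_n$, $b_n$, or $b_{n+1}$) and a small anchor $z$ of norm at most $\epsilon'$ (namely $x_n$ or $y_{n+1}$), I need to locate $u-z$ inside either the cone $C(5\theta/3)=C(f,\sin(5\theta/3))$ or the cone $V(4\theta/3)=V(f,\sin(4\theta/3))$. My plan is to rewrite each cone membership as an inequality between $f(u-z)$ and $\sin(\cdot)\|u-z\|$, to estimate $f(u)$ from the already-established containments $A_n\subset C(2\theta)-\epsilon' x_0$ and $B_n\subset V(\theta)+\epsilon' x_0$ (valid eventually for both indices $n$ and $n+1$), and then to absorb the $z$-perturbation through the triangle inequality.

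For (i) and (iii), where $u=a_n$ and $z\in\{x_n,y_{n+1}\}$, the $A_n$-containment gives $f(a_n+\epsilon' x_0)\geq\sin(2\theta)\|a_n+\epsilon' x_0\|$, hence $f(a_n)\geq\sin(2\theta)\|a_n\|-2\epsilon'$. Combining this with $|f(z)|\leq\|z\|\leq\epsilon'$ and $\|a_n-z\|\leq\|a_n\|+\epsilon'$, the target inequality $f(a_n-z)\geq\sin(5\theta/3)\|a_n-z\|$ reduces to
\[
(\sin(2\theta)-\sin(5\theta/3))\,\|a_n\|\geq 4\epsilon',
\]
which is delivered by condition~(c) together with $\|a_n\|\geq M$.

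For (ii) and (iv), where $u\in\{b_n,b_{n+1}\}$ and $z\in\{x_n,y_{n+1}\}$, the $B$-containments give $f(u-\epsilon' x_0)\leq\sin\theta\|u-\epsilon' x_0\|$, hence $f(u)\leq\sin\theta\|u\|+2\epsilon'$. Subtracting $f(z)$ and using $\|u-z\|\geq\|u\|-\epsilon'$, which is strictly positive by~(a) since $\|u\|\geq M\geq 2\epsilon'$, reduces the target $f(u-z)\leq\sin(4\theta/3)\|u-z\|$ to
\[
(\sin(4\theta/3)-\sin\theta)\,\|u\|\geq 4\epsilon',
\]
which is delivered by condition~(b) together with $\|u\|\geq M$.

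There is no genuine obstacle here: the four statements amount to four parallel triangle-inequality computations, and all the delicate work has already been carried out in the preamble of the proof, where $\beta$, $\theta$, and $\epsilon'$ were calibrated precisely so that~(b) and~(c) absorb the constants produced above. Condition~(d) plays no role in the present Claim; it is clearly reserved for the next step of the proof of Theorem~\ref{puntolur}, where one presumably compares cosines between iterates of different indices via Fact~\ref{fact:coseno}.
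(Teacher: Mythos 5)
Your proof is correct and follows essentially the same route as the paper: both derive the four inclusions from the containments $A_n\subset C(2\theta)-\epsilon' x_0$ and $B_n\subset V(\theta)+\epsilon' x_0$ together with $\|x_n\|,\|y_{n+1}\|\leq\epsilon'$, absorbing the perturbation terms via the triangle inequality and conditions (b), (c) (the paper divides by $\|u-z\|\geq M/2$ using (a), whereas you compare against $\|u\|\geq M$ --- an immaterial rearrangement of the same estimate). Your remark that (d) is only used after the Claim, in the cosine comparison via Fact~\ref{fact:coseno}, is also accurate.
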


\begin{proof}[Proof of the claim]
	Let us prove (i) and (ii), the proof of (iii) and (iv) is similar. To prove (i), observe that, since $a_n\in A_n\subset C(2\theta)-\epsilon' x_0$, we have
	\begin{eqnarray*}
		f(a_n-x_n)&\geq& f(a_n+\epsilon'x_0)-2\epsilon'\\      
		&\geq&\sin(2\theta)(\|a_n+\epsilon'x_0\|)-2\epsilon'\\
		&\geq&\sin(2\theta)(\|a_n-x_n\|-2\epsilon')-2\epsilon'\\
		\textstyle&=&\textstyle\|a_n-x_n\|(\sin(2\theta)-\frac{2\epsilon'\sin(2\theta)+2\epsilon'}{\|a_n-x_n\|})\\ 
		\textstyle&\geq&\textstyle\|a_n-x_n\|(\sin(2\theta)-\frac8M{\epsilon'}) \\
		&\geq&\textstyle\|a_n-x_n\|\sin(\frac53\theta),
	\end{eqnarray*}
	where the last inequality holds by (c).
	To prove (ii), we proceed similarly: observe that, since $b_n\in B_n\subset V(\theta)+\epsilon' x_0$, we have
	\begin{eqnarray*}
		f(b_n-x_n)&\leq& f(b_n-\epsilon'x_0)+2\epsilon'\\      
		&\leq&\sin(\theta)(\|b_n-\epsilon'x_0\|)+2\epsilon'\\
		&\leq&\sin(\theta)(\|b_n-x_n\|+2\epsilon')+2\epsilon'\\
		\textstyle&=&\textstyle\|b_n-x_n\|(\sin\theta+\frac{2\epsilon'\sin\theta+2\epsilon'}{\|b_n-x_n\|})\\ 
		\textstyle&\leq&\textstyle\|b_n-x_n\|(\sin\theta+\frac8M{\epsilon'}) \\
		&\leq&\textstyle\|b_n-x_n\|\sin(\frac43\theta),
	\end{eqnarray*}
	where the last inequality holds by (b). The claim is proved.
	\end{proof} 

Now, since $a_n=P_{A_n}b_n$ and $x_n\in A_n$, by (\ref{eq:proiezionecoseno}), it holds
\begin{equation}\label{eq:normadecresce}
\|a_n-x_n\|\leq\|b_n-x_n\|\cos(a_n-x_n,b_n-x_n).
\end{equation}  
Then we can observe that, by (i) and (ii) in our claim and by Fact~\ref{fact:coseno}, we have that eventually, if $a_n,b_n\not\in M B_X$, it holds $\|a_n-x_n\|\leq\|b_n-x_n\|\cos(\frac13\theta)$ and hence 
$$\textstyle \|a_n\|\leq\|a_n-x_n\|+\epsilon'\leq(\|b_n\|+\epsilon')\cos(\frac13\theta)+\epsilon'\leq\|b_n\|(\cos(\frac13\theta)+\frac2M\epsilon')\leq\|b_n\|\cos(\frac16\theta),$$
where the last inequality holds by (d).
Similarly, since $b_{n+1}=P_{B_n}a_n$ and $y_{n+1}\in B_n$, it holds $\|b_{n+1}-y_{n+1}\|\leq\|a_n-y_{n+1}\|\cos(b_{n+1}-y_{n+1},a_n-y_{n+1})$. By (iii) and (iv) in our claim and by Fact~\ref{fact:coseno}, we have that eventually, if $a_n,b_{n+1}\not\in M B_X$, it holds $\|b_{n+1}-y_{n+1}\|\leq\|a_n-y_{n+1}\|\cos(\frac13\theta)$ and hence 
$$\textstyle \|b_{n+1}\|\leq(\|a_n\|+\epsilon')\cos(\frac13\theta)+\epsilon'\leq\|a_n\|(\cos(\frac13\theta)+\frac2M\epsilon')\leq\|a_n\|\cos(\frac16\theta),$$
where the last inequality holds by (d).

By (\ref{eq:normadecresce}) and by the observations above, there exists $n_0\in\N$ such that if $n\geq n_0$ then the following conditions hold:
\begin{enumerate}
	\item[($\alpha)$] if $a_n,b_n\not\in M B_X$ then $ \|a_n\|\leq\|b_n\|\cos(\frac16\theta)$, and if $a_n,b_{n+1}\not\in M B_X$ then $\|b_{n+1}\|\leq\|a_n\|\cos(\frac16\theta)$;
	\item[($\beta)$] if $b_n\in M B_X$ then $\|a_n\|\leq\|b_n\|+2\epsilon'\leq 2M$, and if $a_n\in M B_X$ then $\|b_{n+1}\|\leq\|a_n\|+2\epsilon'\leq 2M$.
	 \end{enumerate}

Now, it is easy to see that there exists $n_1\geq n_0$ such that $a_{n_1}\in MB_X$ or $b_{n_1}\in MB_X$. Indeed, since $\cos(\frac16\theta)<1$, the fact that, for each $n\geq n_0$, $a_n,b_n\not\in M B_X$ contradicts ($\alpha$). By ($\beta$) and taking into account also  ($\alpha$), we obtain that $a_n,b_n\in2M B_X$, whenever $n>n_1$.
\end{proof}

\begin{corollary}\label{corollary:puntolur} Let $X$ be a Hilbert space, $B$
	a nonempty closed convex subset of $X$, $A$ a body in $X$ and
	$y\in\partial A$ an LUR point of $A$. Let $\{A_n\}$ and $\{B_n\}$
	be two sequences of closed convex sets such that $A_n\rightarrow
	A$ and $B_n\rightarrow B$ for the Attouch-Wets convergence.
	Suppose that $A\cap B=\{y\}$.
	Then, for each $a_0\in X$, the corresponding perturbed alternating projections sequences  $\{a_n\}$ and $\{b_n\}$ (with starting point $a_0$), converge to $y$ in norm.
\end{corollary}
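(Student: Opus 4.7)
The plan is to reduce the corollary to Theorem~\ref{puntolur} by producing a functional in $S_{X^*}$ that both separates $A$ from $B$ and strongly exposes $A$ at $y$.

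First, I would use convex separation to obtain a suitable functional. Since $A$ is a body, $\mathrm{int}(A)\neq\emptyset$. Since $A\cap B=\{y\}$ and $y\in\partial A$, the convex sets $\mathrm{int}(A)$ and $B$ are disjoint. Hence by the Hahn--Banach separation theorem there exists a nonzero $g\in X^*$ with
\[
\sup g(\mathrm{int}(A))\leq \inf g(B).
\]
By continuity $\sup g(A)\leq \inf g(B)$, and since $y\in A\cap B$ this forces $\sup g(A)=g(y)=\inf g(B)$. Normalizing, I set $f=g/\|g\|\in S_{X^*}$, which is then a support functional for $A$ at $y$ satisfying the separation condition $\sup f(A)=f(y)=\inf f(B)$ required in Theorem~\ref{puntolur}.

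Second, I would invoke the LUR hypothesis to upgrade this support functional to a strongly exposing one. Since $y\in\partial A$ is an LUR point of the body $A$ and $f\in S_{X^*}$ supports $A$ at $y$, Lemma~\ref{slicelimitatoselur} applies and yields that $f$ strongly exposes $A$ at $y$.

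With these ingredients in place, I would apply Theorem~\ref{puntolur} directly to the couple $(A,B)$ with this functional $f$ and the chosen $y$: for any sequences $\{A_n\},\{B_n\}$ Attouch--Wets converging to $A,B$ and any starting point $a_0\in X$, the perturbed alternating projections sequences $\{a_n\},\{b_n\}$ converge in norm to $y$, which is exactly the conclusion of the corollary. There is no real obstacle here; the whole argument is a clean assembly of the separation theorem, Lemma~\ref{slicelimitatoselur}, and Theorem~\ref{puntolur}.
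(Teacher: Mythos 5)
Your argument is correct and is essentially the paper's own proof: separate $\mathrm{int}(A)$ from $B$ by Hahn--Banach (using $A\cap B=\{y\}$ and $y\in\partial A$ to get disjointness), normalize to obtain a support functional $f\in S_{X^*}$ for $A$ at $y$ with $\sup f(A)=f(y)=\inf f(B)$, upgrade it to a strongly exposing functional via Lemma~\ref{slicelimitatoselur}, and conclude by Theorem~\ref{puntolur}. No gaps; your version just spells out the elementary separation details that the paper leaves implicit.
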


\begin{proof}
 Since $(\inte A)\cap B=\emptyset$, by the Hahn-Banach separation theorem, there exists $f\in S_{X^*}$ such that $$\inf f(A)=f(y)=\sup f(B).$$
 Since $y$ is an LUR point of $A$, by Lemma~\ref{slicelimitatoselur}, $f$ strongly exposes $A$ at $y$. The thesis follows by Theorem~\ref{puntolur}.
\end{proof}

It is worth noting that, in the recent paper \cite{GrKaKuReich},  a result concerning the convergence of iterates of nonexpansive mapping has been obtained under a geometrical condition involving LUR points.

\section{The case where the interior of the intersection of limits sets is nonempty} \label{Section:nonempty interior}

The main aim of this section is to prove that, under the assumption that the interior of  $A\cap B$ is nonempty, the couple $(A,B)$ is stable.

We start by the following two dimensional fact. Even if the argument used is elementary we include a sketch of a possible proof for the sake of completeness. 

\begin{fact}\label{fact:norms}
	Let $X$ be a Hilbert space and $\epsilon,K>0$. Then there exists a constant $\mu>0$ such that, whenever $C$ is a closed convex subset  of $X$ containing $\epsilon B_X$ and $x\in KB_X$, we have
\begin{equation}\label{eq:angleboundedawayfrom0}
\|x-P_Cx\|\leq\mu(\|x\|-\|P_C x\|).
\end{equation} 
\end{fact}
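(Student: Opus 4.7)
The plan is to extract the required bound directly from the variational characterization (\ref{eq:proiezionesuconvesso}) of $P_C$ combined with the hypothesis $\epsilon B_X\subset C$, without literally reducing to two dimensions; the key ingredient is the Hilbert space identity $\|x\|^2 = \|x-y\|^2 + 2\langle x-y,y\rangle + \|y\|^2$ applied with $y:=P_Cx$. The guess for the constant is $\mu=K/\epsilon$.

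First I would dispose of the trivial case $x\in C$, where both sides of \eqref{eq:angleboundedawayfrom0} vanish. For $x\notin C$, the variational inequality applied to every $z\in \epsilon B_X\subset C$ gives $\langle x-y,z\rangle\leq \langle x-y,y\rangle$, and taking the supremum over $z\in\epsilon B_X$ yields the sharp lower bound
$$\epsilon\,\|x-y\|\leq \langle x-y,y\rangle.$$
Plugging this into the expansion of $\|x\|^2$ gives
$$\|x\|^2-\|y\|^2\geq \|x-y\|^2 + 2\epsilon\,\|x-y\|\geq 2\epsilon\,\|x-y\|,$$
so factoring the left-hand side yields
$$\|x-y\|\leq \frac{\|x\|+\|y\|}{2\epsilon}\bigl(\|x\|-\|y\|\bigr).$$
Finally, since $0\in C$ forces $P_C0=0$, nonexpansiveness of $P_C$ gives $\|y\|=\|P_Cx-P_C0\|\leq \|x\|\leq K$, whence $\|x\|+\|y\|\leq 2K$ and the claimed inequality holds with $\mu=K/\epsilon$.

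There is no real obstacle; the computation is essentially a one-liner. The only point that requires a moment's thought is the bound $\|P_Cx\|\leq \|x\|$, which rests on the observation that $0\in\epsilon B_X\subset C$ implies $P_C0=0$. If one genuinely wishes to recover the two-dimensional geometric picture suggested by the authors, one can work in the plane $\span\{x,y\}$ and read $\epsilon\,\|x-y\|\leq \langle x-y,y\rangle$ as a lower bound on the cosine of the angle between $y$ and $x-y$, thereby bounding that angle away from $\pi/2$; but the direct computation above seems more efficient and provides the explicit constant.
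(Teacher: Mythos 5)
Your proof is correct, and it reaches the same constant $\mu=K/\epsilon$ as the paper, but by a genuinely different and in fact tighter route. The paper only sketches a two-dimensional geometric argument: it passes to $Y=\span\{x,y\}$ with $y=P_Cx$, constructs a point $w\in\epsilon S_Y$ where the line through $y$ is tangent to $\epsilon B_Y$, and compares the angles $\theta(x-y,w-y)$, $\theta(-y,z-y)$ via the cosine form of the variational characterization, which forces it to handle separately the degenerate case where $x$ and $y$ are proportional and to justify the existence of $w$. You instead test the variational inequality $\langle x-y,z-y\rangle\leq0$ against all $z\in\epsilon B_X\subset C$, which yields directly $\epsilon\|x-y\|\leq\langle x-y,y\rangle$, and then the expansion $\|x\|^2-\|y\|^2=\|x-y\|^2+2\langle x-y,y\rangle\geq2\epsilon\|x-y\|$ gives
\begin{equation*}
\|x-y\|\leq\frac{\|x\|+\|y\|}{2\epsilon}\bigl(\|x\|-\|y\|\bigr)\leq\frac{K}{\epsilon}\bigl(\|x\|-\|y\|\bigr),
\end{equation*}
using $\|y\|\leq\|x\|\leq K$ (which, as you note, follows from $0\in C$ and nonexpansiveness of $P_C$, or even directly from your own inequality since $\langle x-y,y\rangle\geq0$). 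What your approach buys is a completely rigorous, case-free computation with the explicit constant and no reduction to a plane; what the paper's sketch buys is the geometric picture (the angle at $y$ between $x-y$ and $-y$ bounded away from $\pi/2$), which is precisely the interpretation you mention at the end. No gaps.
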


\begin{proof} We claim that $\mu=K/\epsilon$ works. Let us denote by $\theta(u,v)$ the angle between two not null  vectors $u$ and $v$.

	Let us denote $y=P_C x$.
We can (and do) assume  that $y$ and $x$ are not proportional  (if else (\ref{eq:angleboundedawayfrom0}) trivially holds). Hence, since $\epsilon B_X\subset C$, we have that $\epsilon<\|y\|<\|x\|$. Let $Y=\mathrm{span}\{x,y\}$ and let $w\in\epsilon S_Y$ be such that:
\begin{enumerate}
	\item the line containing $\{y,w\}$ is tangent to $\epsilon B_Y$;
	\item the segment $[y,w]$ intersects the segment $[0,x]$.
\end{enumerate}  
Observe that existence of such an element $w$ is guaranteed by the fact that $\|x-y\|\leq \|x\|-\epsilon$.
  Since the vectors $w$ and $w-y$ are orthogonal, we clearly have  $\sin\theta(-y,w-y)\geq \epsilon/K $.  Let us denote $z=\frac{\|y\|}{\|x\|}x$, by the variational characterization of best approximations from convex sets in Hilbert spaces and by the fact that $\|z\|=\|y\|$, we have:
\begin{enumerate}
\item $\theta(x-y,w-y)\geq \pi/2$;
\item $\theta(-y,z-y)\leq \pi/2$.
\end{enumerate} 	
	It follows that $\theta(x-y,z-y)\geq \theta(-y,w-y)$ and hence that $$\|x-y\|\leq\frac{K}{\epsilon}\|x-z\|=\frac{K}{\epsilon}(\|x\|-\|y\|)$$
	\end{proof}

The following theorem is the main result of this section and it is an application of the previous argument.

\begin{theorem}\label{theorem:corpilur} Let $X$ be a Hilbert space and $A,B$
	nonempty closed convex subsets of $X$. 
	Suppose that $\inte(A\cap B)\neq\emptyset$, then the couple $(A,B)$ is stable.
\end{theorem}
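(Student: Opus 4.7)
The plan is to exploit Fact~\ref{fact:norms} in a simple telescoping argument, once it is known that the perturbed sets $A_n$, $B_n$ eventually contain a common ball. After translating, I may assume $0\in\mathrm{int}(A\cap B)$, so that $\epsilon B_X\subset A\cap B$ for some $\epsilon>0$. By the distance-function characterization of Attouch--Wets convergence, $\mathrm{dist}(\cdot,A_n)\to 0$ and $\mathrm{dist}(\cdot,B_n)\to 0$ uniformly on $\epsilon B_X$, so for any fixed $\epsilon'\in(0,\epsilon)$ there is $N_0\in\mathbb N$ with $\epsilon' B_X\subset A_n\cap B_n$ for every $n\geq N_0$.

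Next I use the standard fact that in a Hilbert space the projection onto a closed convex set containing $0$ is norm-decreasing: applying (\ref{eq:proiezionesuconvesso}) with $y=0$ gives $\|P_Cx\|\leq \|x\|$ whenever $0\in C$. Hence for $n\geq N_0$,
$$\|a_n\|\leq \|b_n\|\leq \|a_{n-1}\|.$$
Setting $K:=\|a_{N_0-1}\|$, both $\{a_n\}_{n\geq N_0}$ and $\{b_n\}_{n\geq N_0}$ lie in $KB_X$, and the numerical sequences $\{\|a_n\|\}$, $\{\|b_n\|\}$ are eventually non-increasing and therefore converge to a common limit $\ell\geq 0$.

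Now apply Fact~\ref{fact:norms} with constants $K$ and $\epsilon'$, obtaining a $\mu>0$ such that, for every $n\geq N_0$,
$$\|a_{n-1}-b_n\|\leq \mu(\|a_{n-1}\|-\|b_n\|)\quad\text{and}\quad \|b_n-a_n\|\leq \mu(\|b_n\|-\|a_n\|).$$
Summing telescopes to
$$\sum_{n\geq N_0}\bigl(\|a_{n-1}-b_n\|+\|b_n-a_n\|\bigr)\leq \mu(K-\ell)<\infty,$$
so $\{a_n\}$ and $\{b_n\}$ are Cauchy and norm-convergent. By the Remark at the beginning of Section~\ref{section:Infinite-dimensional}, their common limit lies in $A\cap B$, and this is exactly the conclusion that $(A,B)$ is stable.

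The heart of the argument is Fact~\ref{fact:norms}, which converts the drop in norm produced by a projection into a linear bound on the projection residual; everything else is a clean telescoping. The only point requiring care is the initial reduction showing that $A_n$ and $B_n$ eventually contain a ball around a common interior point of $A\cap B$, which is a routine consequence of Attouch--Wets convergence. It is worth noting that boundedness of $A\cap B$ plays no role whatsoever: all estimates are controlled by the interior point and the initial datum $a_0$.
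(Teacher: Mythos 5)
Your proof is correct and follows essentially the same route as the paper: norm-monotonicity of the projections once the sets contain the origin, Fact~\ref{fact:norms}, and the telescoping sum bounding $\sum_n(\|a_{n-1}-b_n\|+\|b_n-a_n\|)$. The only divergence is how the eventual containment $\epsilon' B_X\subset A_n\cap B_n$ is obtained: the paper invokes Penot--Z\u{a}linescu's intersection-convergence result together with Beer's theorem, whereas you argue directly from the Attouch--Wets convergence of $\{A_n\}$ and $\{B_n\}$ separately and label the step routine --- it is indeed valid for closed convex sets (a one-line Hahn--Banach separation argument, or R{\aa}dstr\"om cancellation, upgrades uniform smallness of $\mathrm{dist}(\cdot,A_n)$ on $\epsilon B_X$ to the inclusion of the smaller ball), so this is a legitimate and slightly lighter justification.
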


\begin{proof}
	Without any loss of generality, we can suppose that $0\in\inte (A\cap B)$.
	Let $\{A_n\}$ and $\{B_n\}$
	be two sequences of closed convex sets such that $A_n\rightarrow
	A$ and $B_n\rightarrow B$ for the Attouch-Wets convergence. Suppose that $\{a_n\}$ and $\{b_n\}$ are
	the corresponding perturbed alternating projections sequences  with respect to a given starting point $a_0$.
	
	By Proposition 27 in \cite{PenotZalinescu} we have that $A_n\cap B_n \rightarrow A\cap B$ for the Attouch-Wets convergence. Hence, by Theorem 7.4.2 in  \cite{Beer}, we can suppose without any loss of generality that there exists $\epsilon>0$ such that  $\epsilon B_X\subset A_n\cap B_n $, whenever $n\in \N$. Since $0\in A_n\cap B_n$, we have that $\|a_n\|\leq\|b_n\|, \|b_n\|\leq\|a_{n-1}\|$ and hence   there exists $K>0$ such that $\{a_n\},\{b_n\}\subset K B_X$. By Fact~\ref{fact:norms}, we have that there exists $\mu>0$ such that $\|a_n-b_n\|\leq\mu(\|b_n\|-\|a_n\|)$ and
$\|b_n-a_{n-1}\|\leq\mu(\|a_{n-1}\|-\|b_n\|)$. Hence 
$$\textstyle \sum_{n=1}^N(\|a_n-b_n\|+\|b_n-a_{n-1}\|)\leq\sum_{n=1}^N\mu(\|a_{n-1}\|-\|a_n\|)=\mu(\|a_{0}\|-\|a_N\|).$$
This proves that the series $\sum_{n\in\N}(a_n-a_{n-1})$ is absolutely convergent and hence convergent, i.e., the sequence  $\{a_n\}$ is convergent. Similarly, we have that also the sequence $\{b_n\}$ is convergent and the proof is complete. 	
\end{proof}

By combining the results contained in Section~\ref{section:Infinite-dimensional} and the previous theorem we have the following corollary. This corollary describes the stability property for the couple $(A,B)$ where $A$ and $B$ are bodies.

\begin{corollary}\label{corollary:corpilur} Let $X$ be a Hilbert space, suppose that at least one of the following conditions holds.
	\begin{enumerate}
		\item $A$ is a closed convex set with nonempty interior, $f\in X^*\setminus\{0\}$ is such that $f$ strongly exposes $A$ at the origin, and  $B=\{x\in X;\, f(x)\geq \alpha\}$, where $\alpha\leq 0$.
       \item 	$A,B$ are bodies in $X$ such that $A$ is LUR and $A\cap B\neq\emptyset$.
\end{enumerate}  Then the couple $(A,B)$ is stable.
	\end{corollary}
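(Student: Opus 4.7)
The plan is to reduce each of the two cases to one of the two main stability theorems already established, namely Theorem~\ref{puntolur} (the ``strongly exposing separator'' result) or Theorem~\ref{theorem:corpilur} (the ``nonempty interior'' result). The guiding idea is: when $\inte(A\cap B)\neq\emptyset$, invoke Theorem~\ref{theorem:corpilur} directly; otherwise, produce a continuous linear functional that strongly exposes $A$ at a common point of $A$ and $B$ and invoke Theorem~\ref{puntolur}.

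For case~(i), after normalizing so that $f\in S_{X^*}$, strong exposure of $A$ at $0$ forces $f(0)=\sup f(A)=0$, hence $A\subset\{f\le 0\}$. I then split on the sign of $\alpha$. If $\alpha=0$, every $x\in A\cap B$ satisfies $f(x)=0=\sup f(A)$; but strong exposure means the maximum is uniquely attained (apply the definition to the constant sequence at any competing maximizer), so $A\cap B=\{0\}$, and since $\inf f(B)=0=f(0)=\sup f(A)$ the hypotheses of Theorem~\ref{puntolur} are met with $y=0$. If $\alpha<0$, then $0\in\inte B$, and because $A$ is a body with $0\in A$ we have $A=\overline{\inte A}$, so there exist $x_n\in\inte A$ with $x_n\to 0$; then $f(x_n)\to 0>\alpha$, so eventually $x_n\in\inte A\cap\inte B\subset\inte(A\cap B)$, and Theorem~\ref{theorem:corpilur} applies.

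For case~(ii), if $\inte(A\cap B)\neq\emptyset$ I again apply Theorem~\ref{theorem:corpilur} directly. Otherwise, since $A$ and $B$ are bodies one has $\inte A\cap\inte B\subset\inte(A\cap B)=\emptyset$, so the disjoint nonempty open convex sets $\inte A,\inte B$ are separated by Hahn-Banach: there exist $f\in S_{X^*}$ and $c\in\R$ with $f\le c$ on $A$ and $f\ge c$ on $B$. Any $y\in A\cap B$ then satisfies $\sup f(A)=f(y)=\inf f(B)$ and lies on $\partial A$; since $A$ is LUR, $y$ is an LUR point of $A$, and Lemma~\ref{slicelimitatoselur} yields that $f$ strongly exposes $A$ at $y$. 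Theorem~\ref{puntolur} now finishes the proof.

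The argument is essentially bookkeeping: the only genuinely new, and quite mild, step is the density fact $A=\overline{\inte A}$ used in case~(i) with $\alpha<0$ to push interior points of $A$ down toward $0$; everything else is a matter of verifying the hypotheses of the two black-box theorems and of Lemma~\ref{slicelimitatoselur}.
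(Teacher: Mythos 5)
Your proof is correct and follows essentially the same route as the paper: both cases are reduced to Theorem~\ref{theorem:corpilur} when $\inte(A\cap B)\neq\emptyset$ and to Theorem~\ref{puntolur} (via Lemma~\ref{slicelimitatoselur}) otherwise, with your treatment of case~(ii) simply inlining the separation-plus-LUR argument that the paper delegates to Corollary~\ref{corollary:puntolur}, and without needing the paper's observation that $A\cap B$ is then a singleton. The extra details you supply (the density $A=\overline{\inte A}$ for $\alpha<0$, the uniqueness of the maximizer for $\alpha=0$) are correct verifications of steps the paper leaves implicit.
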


\begin{proof}
	(i) If $\alpha<0$ then $\inte(A\cap B)\neq\emptyset$ and we can apply Theorem~\ref{theorem:corpilur}. If $\alpha=0$ apply Theorem~\ref{puntolur}.
	
	\noindent (ii) If $\inte(A\cap B)\neq\emptyset$ we can apply Theorem~\ref{theorem:corpilur}. If $\inte(A\cap B)=\emptyset$, since $A$ and $B$ are bodies, we have that 
	$\inte(A)\cap B=\emptyset$. Since $A$ is an LUR body, there exists $y\in\partial A$ such that $A\cap B=\{y\}$. Apply Corollary~\ref{corollary:puntolur}. 
\end{proof}

It is worth to remark that the assumptions (i) and (ii) in Corollary \ref{corollary:corpilur} cannot be avoided if we ask for a stable couple of bodies. Indeed, when we consider two bodies with nonempty intersection, the typical situation in which (i) and (ii) fail is the following:  there exists a functional $f\in X^*\setminus\{0\}$ separating the bodies $A$ and $B$ but $f$ strongly exposes neither $A$ nor $B$.  The following simple 2-dimensional example shows that, in general, in this case we cannot guarantee that the couple $(A,B)$ is stable.

\begin{example}\label{ex: notconverge}
 Let $X=\R^2$ and let us consider, for each $h\in\N$, the following subsets of $X$:
 \begin{eqnarray*}
 A&=&\textstyle \conv\{(1,1),(-1,1),(1,0),(-1,0)\};\\ 
C_{2h}&=&\textstyle\conv\{(1,1),(-1,1),(1,\frac1h),(-1,0)\};\\ 
 C_{2h-1}&=&\textstyle\conv\{(1,1),(-1,1),(1,0),(-1,\frac1h)\};\\
 B&=&\textstyle\conv\{(1,-1),(-1,-1),(1,0),(-1,0)\};\\ 
D_{2h}&=&\textstyle\conv\{(1,-1),(-1,-1),(1,-\frac1h),(-1,0)\};\\
D_{2h-1}&=&\textstyle\conv\{(1,-1),(-1,-1),(1,0),(-1,-\frac1h)\}.
 \end{eqnarray*} 
 
 We claim that the couple $(A,B)$ is not stable. To prove this, let us consider the starting point $z_0=(0,0)$ and observe that, if we consider the points $a_k=(P_{C_1} P_{D_1})^k z_0$, it is clear that there exists $N_1\in\N$ such that $$\textstyle \|a_{N_1}-(1,0)\|<\frac12.$$
  Define $A_n=C_1$ and $B_n=D_1$  whenever $1\leq n\leq N_1$. Similarly,  if we consider the points $a_{N_1+k}=(P_{C_2} P_{D_2})^{k} a_{N_1}$ then there exists $N_2\in\N$ such that $$\textstyle \|a_{N_1+N_2}-(-1,0)\|<\frac12.$$ Define $A_n=C_2$ and $B_n=D_2$ whenever $N_1+1\leq n\leq N_1+N_2$. Then, proceeding inductively, it is easy to construct sequences  $\{A_n\}$ and $\{B_n\}$ converging respectively to $A$ and $B$ for the Attouch-Wets convergence and such that the perturbed alternating projections sequences  $\{a_n\}$ and $\{b_n\}$, w.r.t. $\{A_n\}$ and $\{B_n\}$ and with starting point $z_0$, do not converge.
\end{example}

\subsection*{Inequality constraints}

Inequality constraints are a typical example of problem that can be solved by projections and reflections methods (see, e.g., \cite[Remark~3.17]{BorweinSimsTam}). It appears very in often in mathematical programming theory. This problem reveals to be a stable problem under mild assumptions. Indeed, in the rest of this section we will show that under suitable additional hypotheses also the method of perturbed alternating projections sequences can be applied to deal with such a problem. 

Given a closed convex cone $K$ in a Hilbert space $X$ (recall that a subset $K$ of $X$ is called cone if $\lambda k\in K$, whenever $\lambda\in [0,\infty)$ and $k\in K$), we denote by $K^-$ its {\em negative polar cone}, i.e., the closed convex cone defined by
$$K^-=\{x\in X;\, \langle x,k \rangle\leq 0, \ \text{whenever}\ k\in K\}.$$ 
Let us suppose that $a\in X\setminus\{0\}$, $b\in \R$, and define $A=\{x\in X;\, \langle a, x\rangle\leq b\}$. Then it is easy to observe that the following assertions hold true.
\begin{itemize}
	    \item If $\inte K\neq\emptyset$, $a_1,\ldots,a_n\in X$, $b_1,\ldots,b_n>0$ and $$B:=\{x\in X;\, \langle a_i, x\rangle\leq b_i,\ i=1,\ldots,n\}$$ then $\inte(B\cap K)\neq \emptyset$.
	\item If $\inte K\neq\emptyset$ and $a\not\in K^-$ then $\inte(A\cap K)\neq \emptyset$.
	\item  If $a\in\inte (K^-)$ and $b=0$ then $A$ and $K$ are separated by a strongly exposing functional for the set $K$.
\end{itemize}

\noindent Hence, by combining the previous observation,  Theorem~\ref{theorem:corpilur}, and Theorem~\ref{puntolur}, we obtain the following result about the convergence of perturbed projections for the inequality constraints problem. 

\begin{theorem}
Let $K$ be a closed convex cone in a Hilbert space $X$. Suppose that at least one of the following conditions holds true.
\begin{enumerate}
	\item $\inte K\neq\emptyset$, $a_1,\ldots,a_n\in X$, $b_1,\ldots,b_n>0$, and $$B:=\{x\in X;\, \langle a_i, x\rangle\leq b_i,\ i=1,\ldots,n\}.$$
	\item $\inte K\neq\emptyset$, $a\not\in K^-$, $b\in\R$, and $$B:=\{x\in X;\, \langle a, x\rangle\leq b\}.$$ 
	\item   $a\in\inte (K^-)$ and $$B:=\{x\in X;\, \langle a, x\rangle\leq 0\}.$$
\end{enumerate}
 Then the couple $(K,B)$ is stable.
\end{theorem}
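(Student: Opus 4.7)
The plan is simply to reduce each of the three cases to one of our earlier stability results, using the three bulleted observations stated just before the theorem. Cases (i) and (ii) will each yield $\inte(K \cap B) \neq \emptyset$, so that Theorem~\ref{theorem:corpilur} applies, while case (iii) will produce a functional that separates $K$ from $B$ and strongly exposes $K$ at $0$, so that Theorem~\ref{puntolur} applies.

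For case (i) I would fix any $x_0 \in \inte K$ and, using that $K$ is a cone and that every $b_i$ is strictly positive, observe that $\epsilon x_0$ lies in $\inte K$ for every $\epsilon > 0$ and satisfies $\langle a_i, \epsilon x_0\rangle < b_i$ for every $i$ as soon as $\epsilon$ is small enough; this exhibits a point of $\inte(K \cap B)$. For case (ii), the assumption $a \notin K^-$ provides some $k \in K$ with $\langle a, k\rangle > 0$; by taking a small convex combination of $k$ with a fixed interior point of $K$ (and using that $\inte K$ is itself an open convex cone, together with the fact that stability presupposes $K \cap B \neq \emptyset$), I would produce a point of $\inte K$ on which $\langle a, \cdot\rangle$ is strictly below $b$. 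This verification is the main technical step of the whole argument; once it is in hand, Theorem~\ref{theorem:corpilur} closes both cases.

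For case (iii) I would choose the functional $f$ represented by $a$, with the sign convention that makes $f$ separate $K$ from $B$ at $y = 0 \in K \cap B$. The inclusion $a \in K^-$ gives $\sup f(K) = f(0) = 0 = \inf f(B)$ directly from the definition of the polar cone and from the description of $B$ as a half-space through $0$ with normal $a$. The stronger assumption $a \in \inte K^-$ provides $\delta > 0$ such that $a + \delta u \in K^-$ for every $u \in S_X$; applied to an arbitrary nonzero $k \in K$ with $u = k/\|k\|$ this yields $\delta\|k\| \leq -\langle a, k\rangle$, so any sequence $\{k_n\} \subset K$ with $\langle a, k_n\rangle \to 0$ is forced to norm-converge to $0$. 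Thus $f$ strongly exposes $K$ at $0$, and Theorem~\ref{puntolur} concludes case (iii).
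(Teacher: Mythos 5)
Your reduction is exactly the paper's: the theorem is obtained there by combining the three bulleted observations that precede it with Theorem~\ref{theorem:corpilur} and Theorem~\ref{puntolur}, and your verification of case (i) is correct. The gaps are in how you verify the observations for cases (ii) and (iii).

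In case (ii) your construction points the wrong way. The hypothesis $a\notin K^-$ gives some $k\in K$ with $\langle a,k\rangle>0$, and averaging such a $k$ with an interior point of $K$ can only increase the value of $\langle a,\cdot\rangle$, so it cannot produce a point of $\inte K$ where $\langle a,\cdot\rangle<b$. What is actually needed is a point of $K$ at which $\langle a,\cdot\rangle<b$: for $b>0$ this is automatic (a small positive multiple of an interior point, as in your case (i); $a\notin K^-$ plays no role), and for $b<0$ it follows from $K\cap B\neq\emptyset$ by scaling a feasible point by some $t>1$ and then averaging with an interior point; but for $b=0$ it amounts to $-a\notin K^-$, which $a\notin K^-$ does not imply. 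Concretely, for $K=\{(x_1,x_2)\in\R^2:\, x_2\geq0\}$, $a=(0,1)$, $b=0$ one has $a\notin K^-$, yet $K\cap B$ is the $x_1$-axis and $\inte(K\cap B)=\emptyset$, so Theorem~\ref{theorem:corpilur} cannot be invoked. (The paper's own second observation has the same defect -- the condition has to be read as $-a\notin K^-$, i.e.\ $a$ not in the positive dual cone -- but your proposed averaging step does not repair it.)

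In case (iii) the separation you assert does not hold with the stated sign. Since $a\in K^-$ means $\langle a,k\rangle\leq0$ for every $k\in K$, the half-space $B=\{x:\,\langle a,x\rangle\leq0\}$ contains $K$, so $K\cap B=K$; if some $f$ satisfied $\inf f(B)=f(0)=\sup f(K)$ and strongly exposed $K$ at $0$, every $k\in K\subseteq B$ would have $f(k)=\sup f(K)$ and strong exposure would force $K=\{0\}$. Concretely, for $f$ represented by $a/\|a\|$ one gets $\sup f(K)=0$ but $\inf f(B)=-\infty$, while for $f$ represented by $-a/\|a\|$ one gets $\inf f(B)=0$ but, by your own estimate $\delta\|k\|\leq-\langle a,k\rangle$, $\sup f(K)=+\infty$ unless $K=\{0\}$; so no ``sign convention'' makes Theorem~\ref{puntolur} applicable as you claim. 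Your $\delta$-estimate is the right ingredient, but it fits the configuration $-a\in\inte(K^-)$, i.e.\ $\langle a,k\rangle\geq\delta\|k\|$ on $K$, in which case $f=-\langle a,\cdot\rangle/\|a\|$ does satisfy $\inf f(B)=0=\sup f(K)$ and strongly exposes $K$ at $0$. The same sign slip is present in the paper's third observation, but a complete argument has to detect and correct it rather than assert the separation ``directly from the definition''.
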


\noindent As a corollary, we obtain the following finite-dimensional result, where the cone is the standard nonnegative lattice cone in $\mathbb{R}^N$.

\begin{corollary}
	Let $X=\R^N$ and $K=\{(x_k)_1^N\in\R^N;\, x_k\geq0, k=1,\ldots,N  \}$. Suppose that at least one of the following conditions holds true.
	\begin{enumerate}
		\item  $a_1,\ldots,a_n\in X$, $b_1,\ldots,b_n>0$, and $$B:=\{x\in X;\, \langle a_i, x\rangle\leq b_i,\ i=1,\ldots,n\}.$$
		\item  $a\not\in K^-$, $b\in\R$, and $$B:=\{x\in X;\, \langle a, x\rangle\leq b\}.$$ 
		\item   $a\in\inte (K^-)$ and $$B:=\{x\in X;\, \langle a, x\rangle\leq 0\}.$$
	\end{enumerate}
	Then the couple $(K,B)$ is stable.
\end{corollary}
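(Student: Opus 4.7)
The plan is to verify that this corollary follows almost immediately from the preceding theorem by checking that the concrete choice $X=\R^N$ and $K=\R^N_+$ satisfies the abstract hypotheses appearing there. The three cases of the corollary correspond exactly to the three cases of the theorem, so the only task is to confirm that the ambient assumptions on $K$ (specifically, the nonemptiness of $\inte K$ and the existence of the interior of $K^-$) are automatic in this finite-dimensional lattice setting.

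First I would compute the negative polar cone explicitly. A direct calculation gives
\[
K^-=\set{y\in\R^N}{\langle y,x\rangle\leq 0 \ \text{for all}\ x\in K}=\set{(y_k)_1^N\in\R^N}{y_k\leq 0,\ k=1,\ldots,N}=-K.
\]
From this I would record the two topological facts I will need: $\inte K=\set{(x_k)_1^N}{x_k>0,\ k=1,\ldots,N}\neq\emptyset$, and likewise $\inte(K^-)=\set{(y_k)_1^N}{y_k<0,\ k=1,\ldots,N}\neq\emptyset$. Both are immediate because the defining inequalities are finite in number and $X$ is finite-dimensional.

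Next I would dispatch the three cases. For (i), since $\inte K\neq\emptyset$, the hypothesis of part (i) of the preceding theorem holds verbatim; for (ii), $\inte K\neq\emptyset$ again, and $a\notin K^-$ is assumed, so part (ii) of the theorem applies; for (iii), $a\in\inte(K^-)$ is exactly the hypothesis required by part (iii) of the theorem. In each case, stability of the couple $(K,B)$ is the conclusion of the corresponding part of the theorem.

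I do not anticipate any genuine obstacle here — the proof is purely a verification that the cone $\R^N_+$ is an instance of the abstract hypotheses, and the only thing worth emphasizing is the explicit identification $K^-=-K$, which is what renders the interior of the polar nonempty and thus makes case (iii) a nontrivial but nonempty regime. Consequently, the written-out proof can be limited to one paragraph stating the three verifications and invoking the preceding theorem.
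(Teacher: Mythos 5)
Your proposal is correct and matches the paper's (implicit) argument: the corollary is obtained by specializing the preceding theorem to $K=\R^N_+$, for which $\inte K\neq\emptyset$ and $K^-=-K$ has nonempty interior, so each case of the corollary falls under the corresponding case of the theorem. The explicit computation $K^-=-K$ you include is exactly the routine verification the paper leaves to the reader.
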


\section{Perturbed alternating projections sequences for subspaces}\label{section:subspaces}

In this section, we study the convergence of the perturbed alternating projections sequences in the case where the limit sets are subspaces. The following elementary example shows that if the intersection of the subspaces is non-trivial, in general, convergence does not hold.

\begin{example}\label{ex:duedimensionale} Let $Z=\R^2$ and let us consider $A_n=A=B=\{(x,0)\in Z;\, x\in\R\}$ ($n\in\N$). For each $h\in\N$, let us consider the line $C_h=\{(x,\frac1h-\frac1{h^2}x);\,x\in\R\}$ passing through the points $(0,\frac1h)$ and $(h,0)$. Let us consider the starting point $z_0=(0,0)$ and observe that, if we consider the points $a_k=(P_A P_{C_1})^k z_0$, it is clear that there exists $N_1\in\N$ such that $\|a_{N_1}\|>\frac12$. Define $B_n=C_1$ whenever $1\leq n\leq N_1$. Similarly,  if we consider the points $a_{N_1+k}=(P_A P_{C_2})^{k} a_{N_1}$ then there exists $N_2\in\N$ such that $\|a_{N_1+N_2}\|>1$. Define $B_n=C_2$ whenever $N_1+1\leq n\leq N_1+N_2$. Then, proceeding inductively, it is easy to construct a sequence $\{B_n\}$ such that the perturbed alternating projections sequences  $\{a_n\}$ and $\{b_n\}$, w.r.t. $\{A_n\}$ and $\{B_n\}$ and with starting point $z_0$, are unbounded.
\end{example}

In order to avoid such a situation we consider the case in which the intersection of the subspaces reduces to the origin. We have the following theorem.

\begin{theorem}\label{prop:sottospazisommachiusa}
	Let $X$ be a Hilbert space and suppose that $U,V\subset X$ are closed subspaces such that $U\cap V=\{0\}$ and $U+V$ is closed. Let $\{A_n\}$ and $\{B_n\}$ be two sequences of closed convex sets such that $A_n\rightarrow
	U$ and $B_n\rightarrow V$ for the Attouch-Wets convergence.
	Then, for each $a_0\in X$, the corresponding perturbed alternating projections sequences  $\{a_n\}$ and $\{b_n\}$, with starting point $a_0$, converge to $0$ in norm.
\end{theorem}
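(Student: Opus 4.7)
The plan is to exploit the classical characterisation of pairs of closed subspaces whose sum is closed: when $U \cap V = \{0\}$, ``$U+V$ is closed'' is equivalent to the cosine of the Friedrichs angle
\[
c := \sup\bigl\{\langle u, v\rangle : u \in U,\ v \in V,\ \|u\| = \|v\| = 1\bigr\}
\]
being strictly less than $1$, as one sees by applying the open mapping theorem to the bounded bijection $(u,v) \mapsto u - v \colon U \times V \to U + V$. Its two immediate consequences, $\|P_V u\| \le c\|u\|$ for every $u \in U$ and $\|P_U v\| \le c\|v\|$ for every $v \in V$, are the contractivity estimates that will drive the argument.

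The heart of the proof is a sharpened one-step bound: for every $M > 0$ I would exhibit a sequence $\eta_n(M) \to 0$ such that, as long as $\|a_{n-1}\| \le M$ and $\|b_n\| \le M$,
\[
\|b_n\| \le c\|a_{n-1}\| + \eta_n(M), \qquad \|a_n\| \le c\|b_n\| + \eta_n(M).
\]
To prove the first inequality I would decompose $a_{n-1} = u_{n-1} + r_{n-1}$ with $u_{n-1} = P_U(a_{n-1})$; since $a_{n-1} \in A_{n-1}$, $\|a_{n-1}\| \le M$ and $A_{n-1} \to U$ for the Attouch-Wets convergence, we have $\|r_{n-1}\| = \dist(a_{n-1}, U) \le e_M(A_{n-1}, U) \to 0$. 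By linearity of $P_V$ (valid because $V$ is a subspace), $\tilde b_n := P_V(a_{n-1}) = P_V u_{n-1} + P_V r_{n-1}$ and the Friedrichs estimate gives $\|\tilde b_n\| \le c\|a_{n-1}\| + \|r_{n-1}\|$. The bound on $\|b_n\|$ then follows from an auxiliary fact, $\|P_{B_n}(x) - P_V(x)\| \to 0$ uniformly for $\|x\| \le M$; the symmetric argument (swapping the roles of $U$ and $V$ and using that $b_n \in B_n$ is close to $V$) treats $\|a_n\|$.

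The main obstacle, I expect, is the auxiliary projection-convergence lemma. My plan for it is to combine the uniform distance convergence $\sup_{\|y\| \le K}|\dist(y, B_n) - \dist(y, V)| \to 0$ with a parallelogram-identity argument: setting $b'_n := P_V\bigl(P_{B_n}(x)\bigr)$, one sees that $b'_n \in V$ is an approximate minimiser of $y \mapsto \|x - y\|$ over $V$ up to an error controlled by the uniform distance convergence, and the uniform convexity of the Hilbert norm then forces $\|b'_n - P_V(x)\|$, and hence $\|P_{B_n}(x) - P_V(x)\|$, to go to $0$ uniformly on bounded sets.

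Finally, the bookkeeping. The crude bounds $\|b_n\| \le \|a_{n-1}\| + \dist(0, B_n)$ and $\|a_n\| \le \|b_n\| + \dist(0, A_n)$, with both distances tending to zero, show $\|a_n\|$ grows at most additively with vanishing increments, so for any given threshold $N$ there is $M_0$ with $\|a_n\|, \|b_n\| \le M_0$ for all $n \le N$. I would then choose $M \ge M_0$ large enough that $(1 + c)\eta_n(M) \le (1 - c^2) M$ for all $n \ge N$ (possible since $\eta_n(M) \to 0$). Induction combined with the sharpened estimate preserves $\|a_n\|, \|b_n\| \le M$ for all $n$ and yields the recursion $\|a_n\| \le c^2 \|a_{n-1}\| + \eta'_n$ with $\eta'_n \to 0$. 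Taking limits superior gives $\limsup_n \|a_n\| \le c^2 \limsup_n \|a_n\|$, and since $c < 1$ we obtain $\|a_n\| \to 0$; the bound $\|b_n\| \le \|a_{n-1}\| + o(1)$ then forces $\|b_n\| \to 0$ as well.
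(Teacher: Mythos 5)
Your overall strategy---the Franchetti--Light/Friedrichs constant $c<1$ together with a one-step contraction $\|b_n\|\le c\|a_{n-1}\|+\eta_n$, $\|a_n\|\le c\|b_n\|+\eta_n$ with vanishing errors---is sound and close in spirit to the paper's proof, and the auxiliary facts you invoke (the crude nonexpansiveness bound, and the parallelogram argument giving $\sup_{\|x\|\le M}\|P_{B_n}x-P_Vx\|\to 0$ for each \emph{fixed} $M$) are correct. The genuine gap is in the bookkeeping step that is supposed to keep the iterates bounded. Your one-step estimate is only valid under the hypothesis $\|a_{n-1}\|\le M$, and the error $\eta_n(M)$ really depends on $M$: it contains $e_M(A_{n-1},U)$ and $\sup_{\|x\|\le M}\|P_{B_n}x-P_Vx\|$, and for fixed $n$ both of these can be of order $M$; Attouch--Wets convergence gives $\eta_n(M)\to0$ only for each fixed $M$, with a threshold $N'(M)$ in $n$ that may grow arbitrarily fast with $M$. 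Hence the instruction ``choose $M\ge M_0$ large enough that $(1+c)\eta_n(M)\le(1-c^2)M$ for all $n\ge N$, possible since $\eta_n(M)\to0$'' is circular: enlarging $M$ enlarges the right-hand side but may enlarge $\eta_n(M)$ proportionally, while $M_0$ depends on $N$ and $N$ must exceed $N'(M)$. Nothing in the argument guarantees that a compatible pair $(N,M)$ exists, so the induction confining $\{a_n\},\{b_n\}$ to $MB_X$ never gets started, and without boundedness the final $\limsup$ argument is vacuous.

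The missing ingredient is an estimate uniform in $M$. Since $U$ is a subspace (hence dilation-invariant), the first error term does scale: taking $z_n\in A_n$ with $\|z_n\|\le\dist(0,A_n)+\tfrac1n$ and replacing $a\in A_n\cap MB_X$ by $\tfrac1M a+(1-\tfrac1M)z_n\in A_n$, which lies in a fixed ball, and scaling back inside $U$, one gets $e_M(A_n,U)\le M\varepsilon_n$ with $\varepsilon_n\to0$ independent of $M$; this is exactly the content of the paper's Lemma~\ref{lemma:defnelconoSottospazio} (eventual containment $A_n\subset U(\varepsilon)$ in an angular neighbourhood). Your second error term, however, does not admit such a scaling: it is controlled by quantities like $e_M(V,B_n)$, and $B_n$ is not a cone, so comparing $P_{B_n}$ with $P_V$ on all of $MB_X$ is the wrong tool for large $M$. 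The paper avoids it by comparing $b_n$ with a point $y_n\in B_n$ close to the origin and using the variational inequality \eqref{eq:proiezionecoseno} together with the angle bound of Lemma~\ref{lemma:approssimazioneprodotto}; this yields a multiplicative decay $\|a_n\|\le\frac{\eta+1}{2}\|b_n\|$ (and similarly for $b_{n+1}$) valid whenever the iterates are \emph{outside} $MB_X$, with no upper bound on them required, so boundedness comes for free. Reworking your contraction step along these lines, or proving $\eta_n(M)\le M\varepsilon_n$ with $\varepsilon_n$ independent of $M$, would close the gap.
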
 

\noindent If $W$ is a subspace of $X$ and $\epsilon\in(0,1)$, let $W(\epsilon)\subset X$ be the set defined by
$$W(\epsilon)=\{w\in X\setminus\{0\};\,\exists u\in W\setminus\{0\}\ \text{such that}\ \cos(u,w)\geq1-\epsilon\}\cup\epsilon B_X.$$ 
An easy computation shows that:
\begin{equation}\label{eq:Uepsilon}
W(\epsilon)=\{w\in X\setminus\{0\};\,\exists u\in W\cap\|w\|S_X\  \text{such that}\  \|u-w\|^2\leq2\epsilon\|w\|^2\}\cup\epsilon B_X.
\end{equation}
Before starting with the proof of the theorem we need the following two lemmas.

\begin{lemma}\label{lemma:defnelconoSottospazio}
	Let $X$ be a Hilbert space and $U$ a subspace of $X$. Let $\{A_n\}$  be a sequence of closed convex sets such that $A_n\rightarrow
	U$ for the Attouch-Wets convergence. Then, for each $\epsilon\in(0,1)$, it eventually holds that $A_n\subset U(\epsilon)$.
\end{lemma}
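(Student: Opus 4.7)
The plan is contradiction, using the scale invariance of $U(\epsilon)$ and the cone structure of $U$ to reduce the problem to a bounded-scale estimate controlled by Fact~\ref{fact:AW}. First I would rewrite the condition $w\in U(\epsilon)$ in a more convenient analytic form: from (\ref{eq:Uepsilon}) together with $\|u-w\|^2=2\|w\|^2-2\langle u,w\rangle$ for $u\in U\cap\|w\|S_X$, minimising over such $u$ gives, for $\|w\|>\epsilon$, the equivalence
\[
w\in U(\epsilon)\iff\|P_U w\|\geq(1-\epsilon)\|w\|\iff\dist(w,U)\leq\delta\|w\|,
\]
where $\delta:=\sqrt{2\epsilon-\epsilon^2}\in(0,1)$ (in the degenerate case $U=\{0\}$ the same analysis reduces to $\|w\|\leq\epsilon$). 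So $a\notin U(\epsilon)$ is equivalent to the scale-invariant pair $\|a\|>\epsilon$ and $\dist(a,U)>\delta\|a\|$.

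Suppose the conclusion fails. Then for some $\epsilon\in(0,1)$ there is a subsequence (still denoted $\{A_n\}$) and $a_n\in A_n$ with $\|a_n\|>\epsilon$ and $\dist(a_n,U)>\delta\|a_n\|$. If $\{a_n\}$ is bounded, Fact~\ref{fact:AW} forces $\dist(a_n,U)\to 0$, contradicting $\dist(a_n,U)>\delta\epsilon>0$. So after a further extraction I may assume $\|a_n\|\to\infty$, and in particular I fix $M>\epsilon$ with $\|a_n\|>M$ for all $n$.

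Here I use the cone structure of $U$ together with convexity. Since $0\in U$ and $A_n\to U$, I choose $c_n\in A_n$ with $\|c_n\|\to 0$; convexity of $A_n$ yields $[c_n,a_n]\subset A_n$, and I pick $a'_n$ on this segment with $\|a'_n\|=M$, writing $a'_n=(1-t_n)c_n+t_n a_n$. The bounded sequence $\{a'_n\}$ satisfies $\|a'_n-u_n\|\to 0$ with $u_n:=P_U(a'_n)\in U$, by Fact~\ref{fact:AW}. For $n$ large enough that $\|c_n\|<M/2$, using $\|a_n\|>M$,
\[
\frac{1}{t_n}=\frac{\|a_n-c_n\|}{\|a'_n-c_n\|}\leq\frac{\|a_n\|+\|c_n\|}{M-\|c_n\|}\leq\frac{3\|a_n\|}{M},
\]
and since $u_n/t_n\in U$ I obtain
\[
\dist(a_n,U)\leq\Bigl\|a_n-\frac{u_n}{t_n}\Bigr\|=\frac{1}{t_n}\|a'_n-u_n-(1-t_n)c_n\|\leq\frac{3\|a_n\|}{M}\bigl(\|a'_n-u_n\|+\|c_n\|\bigr).
\]
Dividing by $\|a_n\|$ and letting $n\to\infty$ gives $\dist(a_n,U)/\|a_n\|\to 0$, contradicting the lower bound $\delta$.

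The main difficulty is precisely this passage from the scale-dependent information provided by Attouch-Wets convergence (which only controls bounded subsets) to the scale-invariant containment $A_n\subset U(\epsilon)$. The renormalisation through an intermediate point $a'_n\in A_n$ of fixed norm $M$, constructed on the segment from an approximate origin $c_n\in A_n$ to $a_n$, is what bridges the two, and it crucially uses both the convexity of $A_n$ and the fact that $U$ is a cone through $0$.
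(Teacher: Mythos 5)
Your argument is correct and is essentially the paper's own: argue by contradiction, note via Fact~\ref{fact:AW} that violators cannot stay bounded, and renormalize an unbounded violator by taking a convex combination with a point $c_n\in A_n$ close to $0$ so that Fact~\ref{fact:AW} can be applied at a fixed bounded scale. The only difference is bookkeeping: you transfer the estimate back to $a_n$ through the scale-invariant characterization $a\notin U(\epsilon)\iff \|a\|>\epsilon$ and $\dist(a,U)>\sqrt{2\epsilon-\epsilon^2}\,\|a\|$ (using that $U$ is a subspace, so $u_n/t_n\in U$), whereas the paper shows directly that the renormalized point $w_k$ still lies outside $U(\frac\epsilon2)$ and contradicts $\dist\bigl(U,X\setminus U(\frac\epsilon2)\bigr)>0$.
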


\begin{proof} On the contrary, suppose  that  there exist $\epsilon\in(0,1)$ and a sequence  $\{n_k\}$ of integers such that, for each $k\in\N$, there exists $x_{n_k}\in A_{n_k}\setminus U(\epsilon)$. Since $A_n\rightarrow
	U$ for the Attouch-Wets convergence, we can suppose, without any loss of generality, that $\|x_{n_k}\|>1$ (indeed, we can observe that $\mathrm{dist}\bigl(U,X\setminus U(\epsilon)\bigr)>0$ and use Fact~\ref{fact:AW}). Let $\gamma\in(0,1)$ be such that $\frac{(1-\epsilon)(1+\frac\gamma{1-\epsilon})}{(1-\frac\epsilon2)(1-\gamma)}\leq1$ and let  $k\in\N$ be such that there exists $z_k\in A_{n_k}\cap\gamma B_X$. Consider  $$\textstyle w_k=\lambda x_{n_k}+(1-\lambda)z_k\in A_{n_k},$$
	where $\lambda=\frac1{\|x_{n_k}\|}$, and observe that $1-\gamma\leq\|w_k\|\leq1+\gamma$ and that, for each $u\in U$, we have
	\begin{eqnarray*}\textstyle \langle w_k,u\rangle&=&\lambda\langle x_{n_k},u\rangle+(1-\lambda)\langle z_{k},u\rangle\leq \|u\|(1-\epsilon)\|\lambda x_{n_k}\|+\gamma\|u\|\\
		&=&  \textstyle \|u\|(1-\epsilon)(1+\frac\gamma{1-\epsilon})\\
		&=& \textstyle
		[(1-\frac\epsilon2)\|u\|\|w_k\|]\frac{(1-\epsilon)(1+\frac\gamma{1-\epsilon})}{(1-\frac\epsilon2)\|w_k\|}\\
		&\leq& \textstyle 
		[(1-\frac\epsilon2)\|u\|\|w_k\|]\frac{(1-\epsilon)(1+\frac\gamma{1-\epsilon})}{(1-\frac\epsilon2)(1-\gamma)}\leq (1-\frac\epsilon2)\|u\|\|w_k\|.
	\end{eqnarray*}
	Hence, $w_k\in A_{n_k}\setminus U(\frac\epsilon2)$. Since $\{w_k\}$ is a bounded sequence, by Fact~\ref{fact:AW}, $\mathrm{dist}(w_k,U)\to0$.
	We get a contradiction since  
	$$\textstyle \mathrm{dist}\bigl(U, X\setminus 
	U(\frac\epsilon2)\bigr)>0.$$
\end{proof}

\begin{lemma}\label{lemma:approssimazioneprodotto}
	Let $U,V$ be closed subspace of a Hilbert space $X$ such that $U\cap V=\{0\}$ and $U+V$ is closed. Let $M\in(0,1)$, then there exist $\epsilon\in(0,M)$ and $\eta\in(0,1)$ such that, for each $x\in U(\epsilon)\setminus MB_X$, $y\in V(\epsilon)\setminus MB_X$ and $z\in\epsilon B_X$, we have $\cos(x-z,y-z)\leq\eta$.
\end{lemma}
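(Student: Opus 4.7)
The plan is to reduce the lemma to the classical Friedrichs/Dixmier angle characterization of closedness of the sum of two subspaces: under the standing hypotheses $U\cap V=\{0\}$ and $U+V$ closed, there exists $c\in[0,1)$ such that
\[
|\langle u,v\rangle|\leq c\|u\|\|v\|\qquad (u\in U,\ v\in V).
\]
(See, e.g., Deutsch's \emph{Best Approximation in Inner Product Spaces}, or the Franchetti--Light reference already cited in the introduction.) Since $U$ and $V$ are subspaces, replacing $u$ by $-u$ removes the absolute value, so the same bound holds for $\langle u,v\rangle$ itself.

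Given this, I would fix $\epsilon\in(0,M)$ (to be shrunk later) and unpack the definitions. Take $x\in U(\epsilon)\setminus MB_X$: since $\|x\|>M>\epsilon$, the point $x$ is not in the ball part of $U(\epsilon)$, so by (\ref{eq:Uepsilon}) there exists $u\in U$ with $\|u\|=\|x\|$ and $\|x-u\|\leq\sqrt{2\epsilon}\,\|x\|$; symmetrically, for $y\in V(\epsilon)\setminus MB_X$ pick $v\in V$ with $\|v\|=\|y\|$ and $\|y-v\|\leq\sqrt{2\epsilon}\,\|y\|$. The estimate begins with the identity
\[
\langle x,y\rangle=\langle u,v\rangle+\langle u,y-v\rangle+\langle x-u,v\rangle+\langle x-u,y-v\rangle,
\]
bound the first term by $c\|x\|\|y\|$ via the angle inequality, and bound the remaining three by Cauchy--Schwarz using the approximation estimates, yielding $\langle x,y\rangle\leq(c+2\sqrt{2\epsilon}+2\epsilon)\|x\|\|y\|$.

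Next, expand $\langle x-z,y-z\rangle=\langle x,y\rangle-\langle x,z\rangle-\langle z,y\rangle+\|z\|^2$ and use $\|z\|\leq\epsilon$ together with $\|x\|,\|y\|>M$ to absorb the cross terms into a multiple of $\|x\|\|y\|$, obtaining
\[
\langle x-z,y-z\rangle\leq\bigl(c+2\sqrt{2\epsilon}+2\epsilon+2\epsilon/M+\epsilon^2/M^2\bigr)\|x\|\|y\|.
\]
On the other hand, $\|x-z\|\geq\|x\|-\epsilon\geq(1-\epsilon/M)\|x\|$ and analogously for $\|y-z\|$. Dividing, one gets
\[
\cos(x-z,y-z)\leq\frac{c+g(\epsilon,M)}{(1-\epsilon/M)^2},
\]
where $g(\epsilon,M)\to 0$ as $\epsilon\to 0^+$. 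Since the right-hand side tends to $c<1$, one can fix $\epsilon\in(0,M)$ small enough that the bound is at most $\eta:=(1+c)/2<1$, which gives the stated inequality.

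The main (conceptual) obstacle is the very first step: one must appeal to the Friedrichs--Dixmier theorem to convert the closedness of $U+V$ (together with $U\cap V=\{0\}$) into the quantitative angle bound $c<1$. This is where the hypothesis is used in an essential way; once $c$ is in hand, the remainder is a routine tracking of error terms that are all $o(1)$ as $\epsilon\to 0^+$, uniformly in $x,y$ (because the denominators are controlled from below by the assumption $\|x\|,\|y\|>M$ and $\|z\|\leq\epsilon<M$).
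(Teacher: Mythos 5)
Your proof is correct and follows essentially the same route as the paper: the key quantitative input (that $U\cap V=\{0\}$ and $U+V$ closed give a constant $c<1$ bounding $\langle u,v\rangle/\|u\|\|v\|$ on $U\times V$) is exactly what the paper takes from \cite[Lemma~3.5]{FranchettiLight86}, and the rest is the same decomposition of $x,y$ into nearby subspace elements plus error terms of size $O(\sqrt{\epsilon})\,\|x\|\|y\|$, absorbed using $\|x\|,\|y\|>M$ and $\|z\|\leq\epsilon$. Your bookkeeping (expanding $\langle x-z,y-z\rangle$ and normalizing by $\|x\|\|y\|$) is, if anything, slightly more careful than the paper's, and the final choice of $\epsilon$ by continuity is equivalent to the paper's explicit choice.
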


\begin{proof}
	By \cite[Lemma~3.5]{FranchettiLight86}, we have that
	$$\textstyle \Omega:=\sup\{{<a,b>};\,a\in V\cap S_X,b\in U\cap S_X\}<1.$$
	Fix any $\eta\in(\Omega,1)$ and take $\epsilon\in(0,M)$ such that 
	$$\textstyle\bigl(\frac{M}{M-\epsilon}\bigr)^2\bigl(\Omega+ \frac{15\sqrt\epsilon}{M^2}\bigr)\leq\eta.$$ Suppose that 
	$x\in U(\epsilon)\setminus MB_X$, $y\in V(\epsilon)\setminus MB_X$ and $z\in\epsilon B_X$. By (\ref{eq:Uepsilon}), there exist $u\in U\cap \|x\|S_X$ and $v\in V\cap \|y\|S_X$ such that $\|x-u\|\leq\sqrt{2\epsilon}\|x\|$ and $\|y-v\|\leq\sqrt{2\epsilon}\|y\|$. Hence, $x':=x-u-z\in 3\sqrt\epsilon B_X$ and $y':=y-v-z\in 3\sqrt\epsilon B_X$. Then  we have:
	\begin{eqnarray*} 
		\textstyle \langle x-z,y-z \rangle &=& \langle u+x',v+y'\rangle \\
		&\leq&\textstyle  \langle u,v\rangle+\langle
		u,y'\rangle+\langle x',v\rangle+\langle x',y'\rangle\\
		&\leq&\textstyle \Omega\|x\|\|y\|+ 3\sqrt\epsilon\|x\|+3\sqrt\epsilon\|y\|+9\epsilon\\
		&\leq&\textstyle \|x\|\|y\|(\Omega+ \frac{3\sqrt\epsilon}{\|x\|}+\frac{3\sqrt\epsilon}{\|y\|}+\frac{9\epsilon}{\|x\|\|y\|})
		\\
		&\leq&\textstyle \|x\|\|y\|(\Omega+ \frac{6\sqrt\epsilon}{M}+\frac{9\epsilon}{M^2})\\
		&\leq&\textstyle \|x\|\|y\|(\Omega+ \frac{15\sqrt\epsilon}{M^2})\\
		&\leq&\textstyle \|x-z\|\|y-z\|\frac{\|x\|}{\|x\|-\epsilon}\frac{\|y\|}{\|y\|-\epsilon}(\Omega+ \frac{15\sqrt\epsilon}{M^2})\\
		&\leq&\textstyle \|x-z\|\|y-z\|\bigl(\frac{M}{M-\epsilon}\bigr)^2(\Omega+ \frac{15\sqrt\epsilon}{M^2})\\
		&\leq&\textstyle\eta \|x-z\|\|y-z\|.  
	\end{eqnarray*}
\end{proof}

\noindent We are now ready to prove our theorem.
\begin{proof}[Proof of Theorem~\ref{prop:sottospazisommachiusa}]
	
	Fix $M\in(0,1)$, it suffices to prove that eventually $a_n, b_n\in 3 M B_X$ (recall that $\{a_n\}$ and $\{b_n\}$ are defined as in Definition~\ref{def:perturbedseq}). Let $\epsilon\in(0,M)$ and $\eta\in(0,1)$ be given by Lemma~\ref{lemma:approssimazioneprodotto}. Let us consider the sets $U(\epsilon),V(\epsilon)$ and observe that, by Lemma~\ref{lemma:defnelconoSottospazio}, there exists $n_0\in\N$ such that if $n\geq n_0$ then $A_n\subset U(\epsilon)$ and $B_n\subset V(\epsilon)$. Let us fix $\epsilon'\in(0,\epsilon)$ such that $\eta+\frac{2\epsilon'}M\leq\frac{\eta+1}2$, then there exists an integer $n_1\geq n_0$ such that, for each $n\geq n_1$, there exist $x_n\in A_n\cap\epsilon' B_X$ and $y_n\in B_n\cap\epsilon' B_X$.
	
	Suppose that $n\geq n_1$,  we can observe that:
	\begin{itemize}
		\item by (\ref{eq:proiezionecoseno}) and Lemma~\ref{lemma:approssimazioneprodotto},  if $a_n,b_n\not\in M B_X$, it holds $\|a_n-x_n\|\leq\|b_n-x_n\|\eta$ and hence 
		$$\textstyle \|a_n\|\leq\|a_n-x_n\|+\epsilon'\leq\eta(\|b_n\|+\epsilon')+\epsilon'\leq\|b_n\|(\eta+\frac{2\epsilon'}M)\leq\frac{\eta+1}2\|b_n\|;$$
		\item similarly, if $a_n,b_{n+1}\not\in M B_X$, it holds 
		$$\textstyle \|b_{n+1}\|\leq\frac{\eta+1}2\|a_n\|;$$
		\item by (\ref{eq:proiezionecoseno}), if  $b_n\in M B_X$ then $\|a_n\|\leq\|b_n\|+2\epsilon'\leq 3M$ and, similarly, if $a_n\in M B_X$ then $\|b_{n+1}\|\leq3M$.
	\end{itemize}
	
	By the observations above and since $\frac{\eta+1}2<1$, proceeding as at the end of the proof of Theorem~\ref{puntolur}, it easily follows that eventually $a_n,b_n\in 3M B_X$.
\end{proof}

The remaining part of this section is devoted to proving that the assumption on the closedness of the sum of the subspaces, in Proposition~\ref{prop:sottospazisommachiusa}, cannot be removed. This result is contained in Theorem~\ref{teo:sommaNONchiusa} below and is inspired by the construction contained in \cite[Section~4]{FranchettiLight86}.   
Let $X=\ell_2$. For the sake of clearness, we point out that, in the sequel, we sometimes use the following notation: if, for each $h\in \N$, $x^h$ is an element of $X$, we denote by $\{x^h\}$ the corresponding sequence in $X$. Moreover, if $h\in\N$ is fixed, we can consider $x^h$ as a sequence of real numbers and we write $x^h=\{x^h_n\}_n$.
Now, suppose that $\{\theta_n\}\subset\R$ is a bounded sequence and  let us consider the linear continuous operator $D:X\to X$ given by $Dx=D\{x_n\}=\{\theta_n x_n\}$ ($x=\{x_n\}\in X$).
Suppose that $b=\{b_n\}\in X$ and consider the closed convex subsets of $Z=X\oplus_2X$ defined as follows:
$$
A=\{(x,0)\in Z;\,x\in X\}\ \ \ \ \text{and}\ \ \ \ V=\{(x,b+Dx)\in Z;\,x\in X\}.
$$ 
Observe that $A$ is a subspace of $Z$ and $V$ is an affine set in $Z$.
\begin{remark} \label{remark calcolo proiezioni}
	If $(\alpha,\beta)\in Z$
	then we obtain immediately that $P_A(\alpha,\beta)=(\alpha,0)$. 
	Now, let us suppose that $(\alpha,0)\in A$ and let us compute $P_V(\alpha,0)$. If we denote $P_V(\alpha,0)=(\{x_n\},\{b_n+\theta_n x_n\})$, by the characterization of best approximation in Hilbert space, we have, for each $\{y_n\}\in X$, 
	$$\textstyle \bigl\langle(\{x_n-\alpha_n\},\{b_n+\theta_n x_n\}),(\{y_n\},\{\theta_n y_n\})\bigr\rangle=0.$$
	Hence, we must have $x_n-\alpha_n+b_n\theta_n+x_n\theta^2_n=0$, whenever $n\in\N$. That is, for each $n\in\N$, it holds
	\begin{equation}\label{eq:proiez}
	\textstyle x_n=\frac{\alpha_n-\theta_n b_n}{1+\theta_n^2}.
	\end{equation} 
\end{remark}

\begin{lemma}\label{lemma:convAWpersottospazi}
	Let $Z$ be defined as above. Let $\{b^n\}\subset X$ be a norm null sequence. Let $D,D^n:X\to X$ ($n\in\N$) be linear bounded operators such that $D^n\to D$ in the operator norm. Then if we define
	$$W=\{(x,Dx)\in Z;\,x\in X\}\ \ \ \text{and}\ \ \ W_n=\{(x,b^n+D^n x)\in Z;\,x\in X\}\ \ \ (n\in\N)$$
	we have that $W_n\rightarrow W$ for the Attouch-Wets convergence.
\end{lemma}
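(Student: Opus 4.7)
The plan is to verify the distance-function characterization of Attouch-Wets convergence recorded just after Definition~\ref{def:AW}, namely that for every $N\in\N$,
$$\textstyle \sup_{\|z\|\le N}|\mathrm{dist}(z,W_n)-\mathrm{dist}(z,W)| \longrightarrow 0.$$
Since $W$ is the graph of the bounded linear operator $D$, it is closed, and since $W_n$ is an affine translate of the graph of $D^n$, it is closed too. Both are closed convex subsets of the Hilbert space $Z$, so projections exist.

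Fix $z=(\alpha,\beta)\in NB_Z$. For the first inequality, let $P_W(z)=(x_0,Dx_0)$. Using the trivial competitor $(0,0)\in W$ we get $\mathrm{dist}(z,W)\le\|z\|\le N$, whence $\|x_0-\alpha\|\le N$ and therefore $\|x_0\|\le 2N$. The point $(x_0,b^n+D^n x_0)$ lies in $W_n$, and the splitting
$$(x_0-\alpha,\,b^n+D^n x_0-\beta)=(x_0-\alpha,\,Dx_0-\beta)+(0,\,b^n+(D^n-D)x_0)$$
together with the triangle inequality in $Z$ yields
$$\mathrm{dist}(z,W_n)\le\mathrm{dist}(z,W)+\|b^n\|+2N\|D^n-D\|.$$

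For the reverse inequality I do the symmetric construction: let $P_{W_n}(z)=(x_n,b^n+D^n x_n)$. Using the trivial competitor $(0,b^n)\in W_n$ we get $\mathrm{dist}(z,W_n)\le N+\|b^n\|$, so $\|x_n\|\le 2N+\|b^n\|$. Since $(x_n,Dx_n)\in W$, the same splitting argument produces
$$\mathrm{dist}(z,W)\le\mathrm{dist}(z,W_n)+\|b^n\|+(2N+\|b^n\|)\|D^n-D\|.$$
Both correction terms are independent of $z$ and tend to zero as $n\to\infty$ by the hypotheses $\|b^n\|\to 0$ and $\|D^n-D\|\to 0$, so the supremum above converges to $0$, as required.

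The whole argument is quite routine; the only delicate point is ensuring that the $X$-components of the respective minimizers admit a bound depending on $N$ alone (and not on the individual $z$), but this is automatic from the trivial competitors $(0,0)\in W$ and $(0,b^n)\in W_n$ together with the Pythagorean identity in $Z=X\oplus_2 X$.
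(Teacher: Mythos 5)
Your proof is correct. The key estimate is the same one the paper uses: a point of one graph is matched with the point of the other graph having the same $X$-component, and the mismatch is controlled by $\|b^n\|+\|x\|\,\|D-D^n\|$. Where you differ is in which formulation of Attouch--Wets convergence you verify: the paper bounds the truncated Hausdorff distance $h_N(W,W_n)$ directly from Definition~\ref{def:AW}, whereas you verify the uniform distance-function characterization of \cite[Theorem~8.2.14]{LUCC}. The paper's route is a little leaner, because for $z=(x,Dx)\in W\cap NB_Z$ one automatically has $\|x\|\le N$, so no projections and no auxiliary bound on the minimizers are needed; your route must first show that the $X$-components of $P_W(z)$ and $P_{W_n}(z)$ are bounded by roughly $2N$, which you do correctly via the trivial competitors $(0,0)\in W$ and $(0,b^n)\in W_n$. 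In exchange, your argument delivers explicit uniform bounds on $|\mathrm{dist}(z,W_n)-\mathrm{dist}(z,W)|$ over $NB_Z$, which is marginally more information, but both proofs yield the same conclusion with essentially the same amount of work.
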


\begin{proof}
	Let us fix $N\in\N$. If $z=(x,Dx)\in W\cap N B_Z$ then we can consider $z'=(x,b^n+D^nx)\in W_n$ and observe that $$\|z-z'\|_Z=\|Dx-D^nx-b^n\|_X\leq N\|D-D^n\|+\|b^n\|_X.$$
	Similarly, if $w=(y,b^n+D^ny)\in W_n\cap N B_Z$ then we can consider $w'=(y,Dy)\in W$ and observe that $$\|w-w'\|_Z=\|Dy-D^ny-b^n\|_X\leq N\|D-D^n\|+\|b^n\|_X.$$
	Hence, $h_N(W,W_n)\leq N\|D-D^n\|+\|b^n\|\to0$ ($n\to\infty$), and the proof is concluded.
\end{proof}

\begin{theorem}\label{teo:sommaNONchiusa} Let $Z$  be defined as above and $A=\{(x,0)\in Z;\,x\in X\}$, then there exist \begin{enumerate}
		\item[(a)] $B$ a closed subspace of $Z$,
		\item[(b)] $z_0\in Z$,
		\item[(c)] $\{A_n\},\{B_n\}\subset c(Z)$
		two sequences of sets converging to $A$ and $B$, respectively, for the Attouch-Wets convergence, 
	\end{enumerate} 
	such that the perturbed alternating projections sequences (w.r.t. $\{A_n\}$ and $\{B_n\}$ and with starting point $z_0$), are unbounded.
\end{theorem}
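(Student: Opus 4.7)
The plan is to take the closed subspace $B = \{(x,Dx) : x \in X\}$ of $Z$, where $D$ is the diagonal operator from the pre-theorem setup, with strictly positive $\theta_n \to 0$; concretely, we will use $\theta_n = 1/\sqrt{n}$. Then $A\cap B = \{0\}$ since every $\theta_n \neq 0$, and $A+B = X \oplus D(X)$ fails to be closed because $D$ has dense but non-closed range (this is the Franchetti-Light situation). Define $A_n := A$ for every $n$ and $B_n := \{(x, b^{(n)} + Dx) : x \in X\}$ for a sequence $\{b^{(n)}\}\subset X$ with $\|b^{(n)}\|_X \to 0$; since here $D^n \equiv D$, Lemma~\ref{lemma:convAWpersottospazi} guarantees $B_n \to B$ in the Attouch-Wets sense. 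Choose $z_0 = 0$.

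Combining Remark~\ref{remark calcolo proiezioni} with $P_A(\alpha,\beta) = (\alpha,0)$, an easy induction gives $a_k = (\alpha^{(k)}, 0)$, with the coordinatewise recursion
\begin{equation*}
\alpha^{(k)}_n = r_n\alpha^{(k-1)}_n - r_n\theta_n b^{(k)}_n, \qquad r_n := (1+\theta_n^2)^{-1},
\end{equation*}
and $\alpha^{(0)} = 0$. Unrolling yields $\alpha^{(k)}_n = -r_n\theta_n \sum_{j=1}^k r_n^{k-j} b^{(j)}_n$. Since $\|a_k\|_Z = \|\alpha^{(k)}\|_X$ and $\|b_k\|_Z \geq \|\alpha^{(k)}\|_X - \|b^{(k)}\|_X$, it suffices to arrange $\|\alpha^{(k)}\|_X \to \infty$ along some subsequence while keeping $\|b^{(j)}\|_X \to 0$.

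For the explicit choice I would take $b^{(j)}_n = c_n\,\chi_{\{j\leq n\}}$ with $c_n^2 = 1/(n\log^2(n+2))$; then $\|b^{(j)}\|_X^2 = \sum_{n\geq j} c_n^2 = O(1/\log j) \to 0$, as required. For every $n\geq k$ and $1\leq j\leq k$ one has $b^{(j)}_n = c_n$, so the geometric sum together with $1-r_n = \theta_n^2 r_n$ gives $|\alpha^{(k)}_n| = c_n(1-r_n^k)/\theta_n$ whenever $n\geq k$. Specializing to $k=m$ and restricting to $n \in [m,2m]$, where $r_n^m = (1+1/n)^{-m}$ is bounded above by some $\kappa < 1$,
\begin{equation*}
\|a_m\|_Z^2 \;\geq\; (1-\kappa)^2\sum_{n=m}^{2m} \frac{c_n^2}{\theta_n^2} \;=\; (1-\kappa)^2\sum_{n=m}^{2m} \frac{1}{\log^2(n+2)} \;\gtrsim\; \frac{m}{\log^2 m} \;\to\; \infty.
\end{equation*}

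The main obstacle I expect is precisely this balancing act: the Attouch-Wets convergence forces $\|b^{(j)}\|_X\to 0$, while a single perturbation is damped in coordinate $n$ by the factor $\theta_n \to 0$, so no finite step can by itself blow anything up. The unboundedness must come from the long memory of the iteration ($r_n \to 1$), that is, from the non-closedness of $D(X)$. Quantitatively, one needs $\sum c_n^2 < \infty$ together with $\sum c_n^2/\theta_n^2 = +\infty$, which is exactly the quantitative form of the Franchetti-Light obstruction to geometric convergence and is why the hypothesis $A+B$ closed in Theorem~\ref{prop:sottospazisommachiusa} is indispensable.
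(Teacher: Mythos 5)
Your construction is correct, and it takes a genuinely different route from the paper's. The paper also keeps $A_n\equiv A$ and uses affine perturbations of a diagonal graph, but with $\theta_n=4^{-n}$ it perturbs the operator itself (tail coefficients $-a_n/M_h$) together with a translation $b^h$, and the scheme is adaptive: the constants $M_h$, the translations $b^h$ and the number $N_h$ of iterations spent on each perturbed set $C_h$ are chosen inductively in terms of the current iterate, so that each block multiplies the relevant tail norm by roughly $1+M_h$ and pushes $\|a_n\|$ beyond $2^{h/2}$; Attouch--Wets convergence is then obtained from Lemma~\ref{lemma:convAWpersottospazi}, exactly as in your argument. You instead fix $D$ once and for all (with $\theta_n=n^{-1/2}$, so that $\sum_n c_n^2<\infty$ while $\sum_n c_n^2/\theta_n^2=\infty$), perturb only by the tails $b^{(j)}$ of a single $\ell_2$ vector --- a sequence of sets chosen a priori, independently of the trajectory --- and get divergence from the closed-form solution of the linear recursion $\alpha^{(k)}_n=r_n\alpha^{(k-1)}_n-r_n\theta_n b^{(k)}_n$: the identity $1-r_n=\theta_n^2r_n$ gives $|\alpha^{(m)}_n|=c_n(1-r_n^m)/\theta_n$ for $n\ge m$, and since $r_n^m=(1+1/n)^{-m}\le 2/3$ for $m\le n\le 2m$, summing over that range yields $\|a_m\|^2\gtrsim m/\log^2 m\to\infty$, with $\|b_m\|\ge\|a_m\|$. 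I checked the details (Remark~\ref{remark calcolo proiezioni} applies because every $a_k\in A$; $\|b^{(j)}\|^2=O(1/\log j)\to 0$; $B_n\to B$ by Lemma~\ref{lemma:convAWpersottospazi} with $D^n\equiv D$; $A\cap B=\{0\}$ and $A+B$ non-closed since $D$ has dense non-closed range), and they hold. What your version buys: the perturbations are non-adaptive and translation-only, the starting point can even be taken in $A\cap B$, you obtain $\|a_m\|\to\infty$ rather than mere unboundedness, and the choice of parameters isolates the quantitative form of the non-closedness of $A+B$ ($\sum c_n^2<\infty$, $\sum c_n^2/\theta_n^2=\infty$) responsible for the failure; the paper's adaptive block construction is less explicit but exhibits the same phenomenon with perturbations of the operator as well, staying closer in spirit to the Franchetti--Light example it builds on.
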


\begin{proof}
	Let us consider the sequence $\{a_n\}\subset\R$, given by  $a_n=4^{-n}$, and let us consider the operator $D:X\to X$, given by $D\{x_n\}=\{a_n x_n\}$. Then define $B=\{(x,Dx)\in Z;\,x\in X\}$ and, for each $n\in\N$, put $A_n=A$. Now, consider any $z_0=(\{\alpha_n\},0)\in A$ such that $\alpha_n>0$ ($n\in\N$) and $\|z_0\|<1$. 
	
	Let us put, $N_0=1$ and, for each $n\in\N$, $\alpha_n^{0,1}=\alpha_n$.  We shall define inductively (with respect to $h\in\N$) positive integers $N_h$, countable families of elements of $X$  $$\textstyle \{\alpha^{h,1}_n\}_n,\{\alpha^{h,2}_n\}_n,\{\alpha^{h,3}_n\}_n\ldots,$$  positive real numbers $M_h$, and  sets $C_h\subset Z$  such that:
	\begin{enumerate}
		\item \label{induzione M} $2^h+h>(1+M_h)^2\sum_{n=h+1}^\infty(\alpha_n^{h-1,N_{h-1}})^2> 2^h$
		\item \label{induzione C} $C_h=\{(x,b^h+D^hx)\in Z;\, x\in X\}$, where $D^h:X\to X$ is given by $D^h\{x_n\}=\{\theta^h_n x_n\}$ and where $b^h=\{b^h_n\}_n\in X$ and $\theta^h_n\in\R$ are given by 
		$$\textstyle b^h_n=\begin{cases}
		0\  \   \ &\text{if}\ n\leq h\\
		\alpha_n^{h-1,N_{h-1}} a_n\frac{1+M_h}{M_h} \   &\text{if}\ n> h
		\end{cases}\ \ \ \ \text{and}\ \ \ \  \
		\theta^h_n=\begin{cases}
		a_n\ &\text{if}\ n\leq h\\
		-\frac{1}{M_h}a_n\   &\text{if}\ n> h
		\end{cases}
		;$$
		\item \label{induzione P(h,1)} $(\{\alpha^{h,1}_n\}_n,0)=P_A P_{C_h}(\{\alpha^{h-1,N_{h-1}}_n\}_n,0)$;
		\item \label{induzione P(h,t+1)}  $(\{\alpha^{h,t+1}_n\}_n,0)=P_A P_{C_h}(\{\alpha^{h,t}_n\}_n,0)$, $t\in\N$;
		\item \label{induzione disuguaglianze} $2^h+h>\sum_{n=1}^\infty(\alpha_n^{h,N_{h}})^2\geq\sum_{n=h+1}^\infty(\alpha_n^{h,N_{h}})^2> 2^h$;
		\item  \label{induzione alfa} $\alpha^{h,t}_n>0$, whenever $n,t\in\N$.
	\end{enumerate}
	Let us show that this is possible.
	Let $h\in\N$ and suppose we already
	have  $N_{h-1}\in\N$ and sequences $$\{\alpha^{h-1,1}_n\}_n,\ldots,\{\alpha^{h-1,N_{h-1}}_n\}_n\subset X$$ such that the following conditions hold:
	\begin{itemize}
		\item $\textstyle 2^{h-1}+h-1>\sum_{n=1}^\infty(\alpha_n^{h-1,N_{h-1}})^2$;
		\item $\alpha^{h-1,N_{h-1}}_n>0$, whenever $n\in\N$.
	\end{itemize}
	
	(Observe that for $h=1$ the two conditions above are trivially satisfied since $\alpha_n^{0,N_0}=\alpha_n>0$ and $\sum_{n=1}^\infty(\alpha_n^{0,N_0})^2=\|z_0\|^2<1$.)
	
	By combining these two relations, we obtain that 
	$$
	2^{h}+h>\sum_{n=1}^\infty(\alpha_n^{h-1,N_{h-1}})^2>\sum_{n=h+1}^\infty(\alpha_n^{h-1,N_{h-1}})^2>0.
	$$
	Hence there exists a positive real number $M_h$ such that (\ref{induzione M}) holds true. Now, let us consider $C_h$ defined as in (\ref{induzione C}). Then, by the relations in (\ref{induzione P(h,1)}) and (\ref{induzione P(h,t+1)}), we define $\{\alpha^{h,t}_n\}_n$ ($t\in\N$). We just have to prove that there exists $N_h\in\N$  such that (\ref{induzione disuguaglianze}) is satisfied and that (\ref{induzione alfa}) holds true. By taking into account Remark \ref{remark calcolo proiezioni} and the fact that $(\{\alpha^{h,1}_n\}_n,0)=P_A P_{C_h}(\{\alpha^{h-1,N_{h-1}}_n\}_n,0)$, an easy computation shows that,  for each $n>h$,
	$$\textstyle \alpha^{h,1}_n=\alpha^{h-1,N_{h-1}}_n\frac{1+\frac{1+M_h}{M_h^2}a_n^2}{1+\frac1{M_h^2}a_n^2}.$$
	Repeating $N$ times the same argument yields:
	$$\textstyle \alpha^{h,N}_n=\alpha^{h-1,N_{h-1}}_n\frac{1+\frac{1+M_h}{M_h^2}a_n^2\sum_{l=0}^{N-1}(1+\frac1{M_h^2}a_n^2)^l}{(1+\frac1{M_h^2}a_n^2)^N}.$$

	Moreover, for each $n\leq h$,
	$$\textstyle \alpha^{h,1}_n=\alpha^{h-1,N_{h-1}}_n\frac{1}{1+a_n^2}.$$
	Repeating $N$ times the same argument yields:
	$$\textstyle \alpha^{h,N}_n=\alpha^{h-1,N_{h-1}}_n\frac{1}{(1+a_n^2)^N}.$$
	Since $$\textstyle \frac{1+\frac{1+M_h}{M_h^2}a_n^2\sum_{l=0}^{N-1}(1+\frac1{M_h^2}a_n^2)^l}{(1+\frac1{M_h^2}a_n^2)^N}=\frac{-M_h+(1+M_h)(1+\frac1{M_h^2}a_n^2)^N}{(1+\frac1{M_h^2}a_n^2)^N}\to 1+M_h \ \ (N\to \infty)$$
	and 
	$$
	\textstyle \frac{1}{(1+a_n^2)^N}\to 0\ \ (N\to \infty),
	$$
	by (\ref{induzione M}) we obtain that there exists $N_h\in\N$ such that
	$$\textstyle 2^h+h>\sum_{n=1}^\infty(\alpha_n^{h,N_{h}})^2\geq\sum_{n=h+1}^\infty(\alpha_n^{h,N_{h}})^2> 2^h.$$
	Moreover, it follows immediately that condition (\ref{induzione alfa}) is satisfied.

	Now, if $\sum_{k=0}^{h-1} N_k\leq n<\sum_{k=0}^{h} N_k$, put $B_n=C_h$. By our construction, it holds that $a_N=(\{\alpha_n^{h,N_h}\},0)$ where $N=\sum_{k=1}^{h} N_k$. In particular, $$\|b_N\|^2\geq\|P_{A} b_N\|^2=\|P_{A_N} b_N\|^2=\|a_N\|^2\geq \sum_{n=h+1}^\infty(\alpha_n^{h,N})^2> 2^h $$
	and hence the the sequences $\{a_n\}$ and $\{b_n\}$ are unbounded.  
	
	\smallskip
	
	It remains to prove that $B_n\rightarrow B$ for the Attouch-Wets convergence or, equivalently,  that $C_h\rightarrow B$ for the Attouch-Wets convergence. In view of Lemma~\ref{lemma:convAWpersottospazi}, it suffices to prove that the sequence $\{b^h\}$ is norm null and that $D^h\to D$ in the operator norm.
	
	By the inequalities in (\ref{induzione M}) and (\ref{induzione disuguaglianze}), we have 
	$$\textstyle (1+M_h)^2(2^{h-1}+h-1)\geq(1+M_h)^2\sum_{n=h+1}^\infty(\alpha_n^{h-1,N_{h-1}})^2> 2^h,$$
	and hence  
	$$\textstyle (1+M_h)^2>\frac{2^h}{2^{h-1}+h-1}.$$
	Therefore the sequence $\{M_h\}$ is bounded away from $0$. Hence, the sequences $\{\frac1{M_h}\}$ and $\{\frac{1+M_h}{M_h}\}$ are bounded above by a positive constant $K$. Then, by the definition of $b^h$ in (\ref{induzione C}), we have
	$$\textstyle \|b^h\|\leq Ka_h\|\{\alpha_n^{h-1,N_{h-1}}\}\|_X\leq \frac K{4^h}\|\{\alpha_n^{h-1,N_{h-1}}\}\|_X\leq\frac K{4^h}\sqrt{2^{h-1}+h-1},$$
	where the last inequality holds by (\ref{induzione disuguaglianze}). Moreover, by the definition of $\theta_n^h$ in (\ref{induzione C}), we have that
	$$\textstyle \|(D-D^h)x\|^2\leq \sum_{n=h+1}^\infty (a_n-\frac1{M_h}a_n)^2x_n^2\leq(1+K)^2a^2_{h+1}\|x\|^2\ \ \ \ \ (x=\{x_n\}\in X). $$
	Therefore, finally we obtain that 
	$$\textstyle \|D-D^h\|\leq (1+K)a_{h+1}.$$
	
\end{proof}

\section*{Acknowledgments.}
The research of the authors is partially
supported by GNAMPA-INdAM, Project GNAMPA 2018. The research of the second author is partially
supported  by the Ministerio de Ciencia, Innovaci\'on y Universidades (MCIU), Agencia Estatal de Investigaci\'on (AEI) (Spain) and Fondo Europeo de Desarrollo Regional (FEDER) under project PGC2018-096899-B-I00 (MCIU/AEI/FEDER, UE).
 The authors thank S.~Reich and E.~Molho for useful remarks that helped them in preparing this paper.


\begin{thebibliography}{WW}



\bibitem{BauschkeBorwein} H.H.~Bauschke and J.M.~Borwein,
{\em On projection algorithms for solving convex feasibility
problems}, SIAM Rev. \textbf{38} (1996), 367--426.


\bibitem{BauschkeBorwein93} H.H.~Bauschke and J.M.~Borwein,
{\em On the convergence of von Neumann's alternating projection algorithm for two sets}, Set-Valued Anal. \textbf{1} (1993), 185--212.

\bibitem{Beer} G.~Beer {\em Topologies on closed and closed convex sets}, Mathematics and its Applications, 268. Kluwer Academic Publishers Group, Dordrecht, 1993.

\bibitem{BCOMB} H.H.~Bauschke and P.L.~Combettes,
{\em Convex analysis and monotone operator theory in Hilbert
spaces}, CMS Books in Mathematics/Ouvrages de Math\'ematiques de
la SMC, Springer, Cham, 2017.


\bibitem{BorweinSimsTam} J.M.~Borwein, B.~Sims, M.K.~Tam,
{\em Norm convergence of realistic projection and reflection
methods}, Optimization \textbf{64} (2015), 161--178.


\bibitem{BorweinZhu} J.M.~Borwein and Q.J.~Zhu,
{\em Techniques of variational analysis}, CMS Books in
Mathematics/Ouvrages de Math\'ematiques de la SMC,
Springer-Verlag, New York, 2005.


\bibitem{Censor} Y.~Censor and A.~Cegielski,
{\em Projection methods: an annotated bibliograph y of books and
reviews},  Optimization \textbf{64} (2015), 2343--2358.

\bibitem{COMB} P.L.~Combettes,
{\em The convex feasibility problem in Image Recovery}, vol. 95 of
Advances in Imaging and Electron Physics, Academic Press, New
York, 1996.

\bibitem{DebeMiglMol} C.A.~De~Bernardi, E.~Miglierina and E.~Molho,
{\em Stability of a convex
	feasibility problem},
J. Global Optim., online first, doi: 10.1007/s10898-019-00806-w


\bibitem{FHHMZ} M.~Fabian, P.~Habala, P.~H\'ajek, V.~Montesinos and V.~ Zizler,
{\em Banach Space Theory. The basis for linear and nonlinear
analysis}, CMS Books in Mathematics/Ouvrages de Math\'ematiques de
la SMC, Springer, New York, 2011.



\bibitem{FranchettiLight86} C.~Franchetti and W.~Light,
{\em On the von Neumann alternating algorithm in Hilbert space},
J. Math. Anal. Appl. \textbf{114} (1986), 305--314.

\bibitem{GrKaKuReich} A. Grzesik, W. Kaczor, T.~Kuczumow and S.~Reich,
{\em Convergence of iterates of nonexpansive mappings and orbits of nonexpansive  semigroups},
J. Math. Anal. Appl. \textbf{475} (2019), 519--531.


\bibitem{Hundal} H.S.~Hundal,
{\em An alternating projection that does not converge in norm},
Nonlinear Anal. \textbf{57} (2004), 35--61.

\bibitem{KVZ} V.~Klee, L.~Vesel\'y and C.~Zanco,
{\em Rotundity and smoothness of convex bodies in reflexive and nonreflexive spaces},
Studia Math.  \textbf{120}  (1996),  191--204.


\bibitem{KopReichHilbert} E. Kopeck\'{a} and S. Reich,
{\em A note on alternating projections in Hilbert space},
J. Fixed Point Theory Appl.  \textbf{12}  (2012),  41--47.


\bibitem{KopReich} E. Kopeck\'{a} and S. Reich,
{\em A note on the von Neumann alternating projections algorithm},
J. Nonlinear Convex Anal.  \textbf{5}  (2004),  379--386.



\bibitem{LUCC} R.~Lucchetti,
{\em Convexity and well-posed problems}, CMS Books in
Mathematics/Ouvrages de Math\'ematiques de la SMC, Springer, New
York, 2006.

\bibitem{MatouReich} E. Matou\v{s}kov\'{a} and S. Reich,
{\em The Hundal example revisited},
J. Nonlinear Convex Anal.  \textbf{4}  (2003),  411--427.

\bibitem{PenotZalinescu}J.-P.~Penot and C.~Z\u{a}linescu, Constantin {\em Continuity of usual operations and variational convergences}, Set-Valued Anal. \textbf{11} (2003), 225--256.



\bibitem{PusReichZas} E. Pustylnik, S. Reich and A. J.~Zaslavski,
{\em Inner inclination of subspaces and infinite products of orthogonal projections},
J. Nonlinear Convex Anal.  \textbf{14}  (2013),  423--436.





\bibitem{vonNeumann} J.~von~Neumann,
{\em On rings of operators. Reduction theory},
Ann. of Math.  \textbf{50}  (1949),  401--485.





%
%
%
%
\end{thebibliography}
\end{document}